\newtheorem{theorem}{Theorem}
\newtheorem{lemma}[theorem]{Lemma}
\theoremstyle{definition}
\newtheorem{definition}[theorem]{Definition}
\theoremstyle{remark}
\newcommand \NN{\mathbbm{N}}
\newcommand \UU{\mathbbm{U}}
\newcommand \RR{\mathbbm{R}}
\newcommand \SI{S_{\infty}}
\newcommand \Iso{{\rm Iso }}
\newcommand \Aut{{\rm Aut }}
\newcommand \qftp[1]{{\rm qftp}( #1) }
\newcommand \spa{{\rm span}}
\newcommand \MA{{\rm MALG}}
\begin{document}

\title[Homogeneous metric structures]{Consequences of the existence of ample generics and automorphism groups of homogeneous metric structures}
\author{Maciej Malicki}

\address{Department of Mathematics and Mathematical Economics, Warsaw School of Economics, al. Niepodleglosci 162, 02-554,Warsaw, Poland}
\email{mamalicki@gmail.com}
\date{Nov 11, 2014}
\keywords{metric structures, automatic continuity, small index property, uncountable cofinality, Polish groups}
\subjclass[2010]{03E15, 54H11, 37B05 }
 
\begin{abstract}
We define a criterion  for a homogeneous, complete metric structure $X$ that implies that the automorphism group $\Aut(X)$ satisfies all the main consequences of the existence of ample generics: it has the automatic continuity property, the small index property, and uncountable cofinality for non-open subgroups. Then we verify it for the Urysohn space $\UU$, the Lebesgue probability measure algebra $\MA$, and the Hilbert space $\ell_2$, thus proving that $\Iso(\UU)$, $\Aut(\MA)$, $U(\ell_2)$, and $O(\ell_2)$ share these properties. We also formulate a condition for $X$ which implies that every homomorphism of $\Aut(X)$ into a separable group $K$ with a left-invariant, complete metric, is trivial, and we verify it for $\UU$, and $\ell_2$.
\end{abstract}

\maketitle
\section{Introduction}

A Polish (that is, separable and completely metrizable) topological group $G$ has ample generics if the diagonal action of $G$ on $G^n$ by conjugation has a comeager orbit for every $n \in \NN$. This notion was first introduced by Hodges, Hodkinson, Lascar and Shelah \cite{HoHo} to study the small index property for automorphism groups of certain countable structures (where a countable structure $M$ is regarded as a discrete space, and $\Aut(M)$ is equipped with the product topology.) Then Kechris and Rosendal \cite{KeRo} fully characterized (locally finite, ultrahomogeneous) countable structures whose automorphism groups have ample generics. They also proved that the existence of ample generics has very strong consequences. Every Polish group $G$ with ample generics has the automatic continuity property (that is, every homomorphism from $G$ into a separable group is continuous), the small index property (that is, every subgroup $H \leq G$ with $[G:H]<2^\omega$ is open), and uncountable cofinality for non-open subgroups (that is, there are no countable exhaustive chains of non-open subgroups in $G$.) As a matter of fact, the automatic continuity property also implies that every isometric action of $G$ on a separable metric space is continuous, and there is only one Polish group topology on $G$.

There is an extensive list of Polish groups known to have ample generics. These include the automorphism groups of the following countable structures: the random graph (Hrushovski \cite{Hr}), the free group on countably many generators (Bryant and Evans \cite{BrEv}), arithmetically saturated models of true arithmetic (Schmerl \cite{Sch}) or the automorphism group of the rational Urysohn space (Solecki \cite{So}.) Also, Malicki \cite{Ma2} characterized all Polish ultrametric spaces whose isometry group contains an open subgroup with ample generics. Moreover, very recently, the first examples Polish groups with ample generics that are not isomorphic to the automorphism group of some countable structure have been found (see \cite{Ma3}, and \cite{KaMa}.) 

On the other hand, many classical and frequently studied Polish groups do not have ample generics. For instance, it has been proved that the isometry group $\Iso(\UU)$ of the Urysohn space $\UU$ (see \cite{GlWe}), the automorphism group $\Aut(\MA)$ of the Lebesgue measure algebra $\MA$  (see \cite{Ju}), and the unitary group $U(\ell_2)$ of the Hilbert space $\ell_2$ (see \cite{GlWe}) have meager conjugacy classes. However, further investigations revealed that they share some properties that are consequences of the existence of ample generics: $U(\ell_2)$ has the automatic continuity property (see \cite{Ts}) and uncountable cofinality (see \cite{RiRo}), while $\Aut(\MA)$ has the automatic continuity property (see \cite{BeBeMe}), and the small index property (see \cite{Ma1}.)


Recently, Sabok \cite{Sa} proposed an approach that sheds new light on these results.  Using model-theoretic methods, he showed that a (complete, homogeneous) metric structure $X$ (understood, essentially, as in \cite{BeBeHeUs}) that satisfies some additional assumptions, always contains countable substructures $Y$ that are elementary submodels of $X$, and whose automorphism groups have ample generics. Moreover, $Y$ behave nicely enough to make a connection between the properties of $\Aut(Y)$, and the properties of $\Aut(X)$. This led him to formulating a criterion for a metric structure $X$ that implies that $\Aut(X)$ has the automatic continuity property. Subsequently, he showed that $\UU$, $\MA$, and $\ell_2$ meet this criterion. He also found a condition implying that every homomorphism of $U(\ell_2)$ into a separable group with a left-invariant, complete metric is trivial (this has been first showed in \cite{Ts}.)

In this paper, we follow Sabok's approach but we propose a different, and quite simple criterion (see Definition \ref{de:SI}) that implies \emph{all} the main consequences of the existence of ample generics: the automatic continuity property, the small index property, and uncountable cofinality for non-open subgroups. We also verify it for $\UU$, $\MA$, and $\ell_2$, thus proving that the groups $\Iso(\UU)$, $\Aut(\MA)$, and $U(\ell_2)$ satisfy these properties. As a matter of fact, since all these groups are connected, they also have uncountable cofinality. 

We also formulate a new condition for a metric structure $X$ (see Definition \ref{de:IHS}) implying that every homomorphism of $\Aut(X)$ into a separable group with a left-invariant, complete metric is trivial, and we verify it for $\UU$, and $\ell_2$.
 
\section{Basic notions}
\paragraph{\textbf{Groups.}} Let $G$ be a topological group. We say that $G$ has the \emph{automatic continuity property} if every homomorphism $\Phi: G \rightarrow K$ into a separable group $K$ is continuous. We say that $H \leq G$ has \emph{small index} if $[G:H]<2^\omega$. We say that $G$ has the \emph{small index property} if every subgroup of $G$ with small index is open in $G$. Finally, $G$ has \emph{uncountable cofinality (for non-open subgroups)} if there are no countably infinite chains $G_1 \leq G_2 \leq \ldots \leq G$ of (non-open) subgroups of $G$ such that $G=\bigcup_k G_k$.

A subset $W \subseteq G$ is called \emph{countably syndetic} if there exist $g_k \in G$, $k \in \NN$, such that $G= \bigcup_k g_kW$. We say that G is \emph{Steinhaus} if there is $k>0$ such that for any symmetric countably syndetic set $W \subseteq G$, $W^k$ contains an open neighborhood in $G$.

\paragraph{\textbf{Metric structures.}} The following framework has been introduced in \cite{BeBeHeUs}, and \cite{Sa}, and in these papers it is discussed much more thoroughly than here.
A \emph{metric structure} is a tuple $(X, d, f_1, f_2, \ldots)$, where $(X,d)$ is a separable metric space and $f_1, f_2,\ldots$ are either closed subsets of $X^n$ (relations) for some $n\in \NN$, or continuous functions from $X^n$ to $X$, or to the reals $\RR$, for some $n \in \NN$. Thus, a metric structure is a two-sorted structure with the second sort being a subset of the real line $\RR$, and $d$ regarded as a function with arguments in the first sort, and values in the second sort. If $(X,d)$ is a complete metric space, we say that $(X, d, f_1, f_2, \ldots)$ is a \emph{complete metric structure}.
%

Given a metric structure $X$ we write $\Aut(X)$ for the group of all automorphisms of $X$ (i.e. bijections of the first sort of $X$ which preserve both the metric and each $f_n$.) $\Aut(X)$ is always endowed with the topology of pointwise convergence, so, if $X$ is complete, $\Aut(X)$ is a Polish group. 

For $G=\Aut(X)$, we denote by $G_{\bar{a}}$ the pointwise stabilizer of $\bar{a}$ in $G$. Also, for $W \subseteq G$, $W[\bar{a}]=\{ w(\bar{a}) \in X^n: w \in W \}$.

Terms and formulas in metric structures are defined as in the first-order logic, the only differences are that quantification is allowed only over the first sort, and that elements of the second sort are regarded as constants (e.g. expressions of the form $d(x,y)=r$, where $r$ is a real number, are quantifier-free formulas.) The truth value of a first-order sentence in a metric structure is also defined as in the classical setting (it is either $0$ or $1$.) The notions of a quantifier free type $p(\bar{x})$, the quantifier free type $\qftp{\bar{a}}$ of a tuple $\bar{a}$ (in the first sort), and the the quantifier free type $\qftp{\bar{a}/\bar{b}}$ of a tuple $\bar{a}$ over a tuple $\bar{b}$ are defined in the standard way. 

For an $n$-tuple $\bar{a}$ of elements in $X$, an $n$-tuple $\bar{x}$ of variables in the first sort, and $\epsilon>0$, we say that a quantifier-free type $p(\bar{x}/\bar{a})$ is an $\epsilon$-\emph{quantifier-free type over} $\bar{a}$ if $\qftp{\bar{a}} \subseteq p$ and $d(a^i, x^i) = d_i$ belongs to $p(\bar{x}/\bar{a})$ for each $i \leq  n$ and for some $0 \leq d_i < \epsilon$.

Given a metric structure $X$, a tuple $\bar{a}$ in $X$ with $p = \qftp{\bar{a}}$, and $\epsilon>0$, say that a subset $Y \subseteq p(X)$ is \emph{relatively} $\epsilon$-\emph{saturated} over $\bar{a}$ if every $\epsilon$-quantifier-free type over $\bar{a}$ which is realized in $X$, is also realized in $Y$. 

We say that a metric structure $X$ is \emph{homogeneous} if every partial isomorphism between finitely generated substructures of $X$ extends to an automorphism of $X$ (such a structure is also called ultrahomogeneous.) Observe that if $X$ is complete and homogeneous, and $p=\qftp{\bar{a}}$ for an $n$-tuple $\bar{a}$ in $X$, then $G[\bar{a}]$ is a $G_\delta$ subset of $X^n$, that is, it is a Polish space, and $G[\bar{a}]=p(X)$. The latter obviously follows from the definition of homogeneity. The former holds because the assumption on closedeness of relations, and continuity of functions implies that, in $X$, the set of realizations of every quantifier-free formula is a $G_\delta$ set. Moreover, a tuple realizes $p$ if and only if it satisfies a countable set of formulas consisting of formulas in $p$ that do not contain a subformula of the form $\tau \neq r$, where $\tau$ is a term, and $r \in \RR$. This is because for every formula $\tau \neq r$ in $p$ there is $s \in \RR$ such that $\tau=s$ is in $p$ as well. 

Let $X$ be a metric structure, $B,C \subseteq X$ be finitely generated substructures. Given a finitely generated substructure $A \subseteq B \cap C$ say that $B$ and $C$ are \emph{independent over} $A$ if for every pair of automorphisms $\phi: B \rightarrow B$, $\psi :C \rightarrow C$ such that $A$ is closed under $\phi$ and $\psi$ and $\phi \upharpoonright A= \psi \upharpoonright A$, the function $\phi \cup \psi$ extends to an automorphism of the substructure generated by $B$ and $C$.

A metric structure $X$ has the \emph{extension property} if for every pair $B,C  \subseteq X$ of finitely generated substructures and a finitely generated substructure $A \subseteq B \cap C$ there is a finitely generated substructure $C' \subseteq X$ with $C'$ which is elementarily equivalent to $C$ over $A$, and is such that $B$, $C'$ are independent over $A$.

Finally, we say that a metric structure $X$ has \emph{locally finite automorphisms} if for every $k \in \NN$, for any finitely generated substructure $Y$ of $X$, and for any partial automorphisms $\phi_1, \ldots, \phi_k$ of $Y$, there is a finitely generated structure $Y'$ of $X$ containing $Y$ such that every $\phi_i$ extends to an automorphism of $Y'$.


\section{Consequences of the existence of ample generics}
We associate with every metric structure $X$ with $G=\Aut(X)$ a pre-defined family of \emph{relevant tuples}, that is, some fixed family $R$ of tuples in $X$ such that for every tuple $\bar{a}$ in $X$ there exists $\bar{b} \in R$ such that $G_{\bar{b}} \leq G_{\bar{a}}$.

\begin{definition} \label{de:SI}
Let $X$ be a metric structure, and let $G=\Aut(X)$. A tuple $\bar{a}$ in $X$ with $p=\qftp{\bar{a}}$ is called \emph{ample} if the set of all the tuples $\bar{b} \in p(X)$ such that $G_{\bar{b}}[\bar{a}]$ is relatively $\delta$-saturated over $\bar{a}$ for some $\delta>0$, is comeager in $p(X)$.
\end{definition}


%
%
The next lemma is a counterpart of \cite[Lemma 6.1]{Sa}, where an analogous statement is proved for symmetric, countably syndetic sets.

\begin{lemma}
\label{th:1}
Let $X$ be a complete, homogeneous metric structure that has locally finite automorphisms and the extension property, and let $G=\Aut(X)$. 
\begin{enumerate}
\item For every subgroup $W \leq G$ with small index there exists a tuple $\bar{a}$ in $X$ such that $G_{\bar{a}} \leq W$,
\item for every countable chain of subgroups $W_1 \leq W_2 \leq \ldots \leq G$ such that $G=\bigcup_k W_k$,  there exists a tuple $\bar{a}$ in $X$, and $k \in \NN$ such that $G_{\bar{a}} \leq W_k$.
\end{enumerate}
\end{lemma}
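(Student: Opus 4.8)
The plan is to handle both parts with a single Baire category argument, using ampleness of a well-chosen relevant tuple. Fix the pre-defined family $R$ of relevant tuples. For part (1), let $W \leq G$ have small index. First I would show that $W$ is countably syndetic: since $[G:W] < 2^\omega$, in particular $W$ has at most countably many cosets (one uses here that $G$ is Polish, so $[G:W]$ is either countable or $2^\omega$; alternatively, one argues directly that a non-countably-syndetic subgroup would have $2^\omega$ cosets by a tree construction). For part (2), each $W_k$ in the chain might not individually be syndetic, but by Baire category applied to $G = \bigcup_k W_k$ — wait, the $W_k$ need not be closed. Instead I would pass to the relevant-tuple picture directly: for each relevant $\bar{b} \in R$ consider whether $G_{\bar{b}} \leq W$ (resp. $G_{\bar{b}} \leq W_k$ for some $k$), and aim to show this holds for some $\bar{b}$.

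The core step is the following. Fix an ample tuple $\bar{a}$ with $p = \qftp{\bar{a}}$; such a tuple exists among (or dominates one in) the relevant tuples — here I am assuming the setup guarantees ample relevant tuples, which is where the hypotheses on $X$ feed in. By ampleness, the set $D$ of $\bar{b} \in p(X)$ for which $G_{\bar{b}}[\bar{a}]$ is relatively $\delta$-saturated over $\bar{a}$ for some $\delta > 0$ is comeager in the Polish space $p(X) = G[\bar{a}]$. Now I would use the syndeticity of $W$: write $G = \bigcup_k g_k W$, so $G[\bar{a}] = \bigcup_k g_k(W[\bar{a}])$, and by Baire category some $g_k \overline{W[\bar{a}]}$ is non-meager, hence (translating back) $\overline{W[\bar{a}]}$ is non-meager and so $W[\bar{a}]$ is non-meager in $p(X)$. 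Therefore $W[\bar{a}]$ meets $D$: pick $\bar{b} = w_0(\bar{a}) \in W[\bar{a}] \cap D$ with $G_{\bar{b}}[\bar{a}]$ relatively $\delta$-saturated over $\bar{a}$. Replacing $W$ by $w_0^{-1} W$ (which contains $\bar{a}$ in its orbit and is still syndetic) I may assume $\bar{a} \in W[\bar{a}]$ and $G_{\bar{a}}[\bar{a}]$ — no: I want $G_{\bar b}$ with $\bar b$ a generic point of $W[\bar a]$. The point is that relative $\delta$-saturation of $G_{\bar{b}}[\bar{a}]$ over $\bar{a}$, combined with homogeneity, locally finite automorphisms, and the extension property, lets me show $G_{\bar{b}} \leq \langle W \cap G_{\bar{b}}\text{-type conjugates}\rangle$; more precisely, that every element of $G_{\bar{b}}$ can be built as a product of boundedly many elements each moving $\bar{a}$ to a point of $W[\bar{a}]$, forcing $G_{\bar{b}} \leq W$ (using that $W$ is a subgroup). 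For part (2), since $W[\bar a] = \bigcup_k W_k[\bar a]$ and $G[\bar a]$ is Baire, some $W_k[\bar a]$ is non-meager, and the same argument with $W_k$ in place of $W$ yields $G_{\bar{b}} \leq W_k$; then one absorbs $\bar b$ back into the chain by noting $G_{\bar b} \le W_j$ for some $j \ge k$ after adjusting, or simply replaces $\bar a$ at the outset by a relevant tuple and tracks that $G_{\bar b} \le W_k$ with $G_{\bar b}$ dominating some $G_{\bar a'}$, $\bar a' \in R$.

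I expect the main obstacle to be the synthesis step: showing that relative $\delta$-saturation of $G_{\bar b}[\bar a]$ over $\bar a$ actually forces $G_{\bar b} \leq W$. This is where one must mimic the Kechris–Rosendal / Sabok style argument: given $h \in G_{\bar b}$, decompose it using the extension property to find an independent copy over the relevant finitely generated substructure, use locally finite automorphisms to realize finite approximations, and use relative $\delta$-saturation to stay inside $W[\bar a]$ at each stage, concluding $h$ lies in a fixed power $W^m$ — and since $W$ is a subgroup, in $W$ itself. Getting the "$\delta$" quantifiers and the finitely-generated-substructure bookkeeping to line up with the definition of relative $\epsilon$-saturation, and checking that the non-meagerness transfers correctly under the countable covering in part (2), are the delicate points; everything else is routine Baire category and coset counting.
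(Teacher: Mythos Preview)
Your proposal has a genuine gap, and it also misidentifies which hypotheses are available.

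First, ampleness of tuples is \emph{not} a hypothesis of this lemma. The lemma assumes only completeness, homogeneity, locally finite automorphisms, and the extension property; ampleness enters later, in Theorem~\ref{th:Main}, which \emph{uses} the present lemma as input. So when you write ``here I am assuming the setup guarantees ample relevant tuples, which is where the hypotheses on $X$ feed in,'' you are importing an assumption you do not have.

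Second, and more seriously, your ``synthesis step'' is circular. You want to pick $\bar b = w_0(\bar a)\in W[\bar a]$ with $G_{\bar b}[\bar a]$ relatively $\delta$-saturated over $\bar a$, and then conclude $G_{\bar b}\le W$. Your mechanism is: every $h\in G_{\bar b}$ is a bounded product of elements ``each moving $\bar a$ to a point of $W[\bar a]$.'' But $g(\bar a)\in W[\bar a]$ only says $g\in W G_{\bar a}$, not $g\in W$; without already knowing $G_{\bar a}\le W$ you cannot pass from ``lands in $W[\bar a]$'' to ``lies in $W$.'' This is exactly why, in the paper, the inclusion $G_{\bar a}\le W$ is established \emph{first} (by the present lemma) and only then is ampleness plus Lemma~\ref{le:2in1} invoked: once $G_{\bar a}\le W$, one has $G_{\bar b}=w_0 G_{\bar a} w_0^{-1}\le W$ for free, and $G_{\bar b}[\bar a]\subseteq W[\bar a]$ gives relative $\delta$-saturation of $W[\bar a]$ itself. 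Your sketch of ``decompose $h$ via independent copies and locally finite automorphisms'' does not supply the missing containment.

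The paper's proof takes a completely different route, and the hypotheses ``locally finite automorphisms'' and ``extension property'' are used in a different place than you expect. One argues by contradiction: if $G_{\bar a}\not\le W$ for every $\bar a$, pick witnesses $f_{\bar a}\in G_{\bar a}\setminus W$. Using the structural hypotheses (via results of Sabok), one builds a countable dense elementary submodel $X_0\prec X$ closed under all $f_{\bar a}$ for $\bar a$ in $X_0$, so that $H=\Aut(X_0)$ embeds in $G$ and has \emph{classical} ample generics. Then $V=W\cap H$ has small index in $H$, so by the small index property for groups with ample generics there is $\bar b$ in $X_0$ with $H_{\bar b}\le V\le W$; but $f_{\bar b}\in H_{\bar b}\setminus W$, a contradiction. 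Part~(2) is the same with witnesses $f_{\bar a,k}$ and the cofinality property for $H$. The Baire-category-on-$p(X)$ picture you sketch is the content of Theorem~\ref{th:Main}, not of this lemma.
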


\begin{proof}
To prove Point (1), fix a subgroup $W \leq G$ with small index, and suppose that the statement of the lemma does not hold for $W$. Then for every tuple $\bar{a}$ in $X$ there exists $f_{\bar{a}} \in G_{\bar{a}}$ such that $f_{\bar{a}} \not \in W$. By \cite[Corollary 4.4]{Sa}, there exists a countable, dense elementary submodel $X_0 \prec X$ which is closed under $f_{\bar{a}}$ for tuples $\bar{a}$ in $X_0$. Since $X_0$ is dense in $X$, every automorphism of $X_0$ can be uniquely extended to an automorphism of $X$. Thus, $H=\Aut(X_0)$ can be regarded as a subgroup of $G$.

Now $V=W \cap H$ has small index in $H$. By \cite[Lemma 5.1]{Sa}, and properties of groups with ample generics, there exists a tuple $\bar{b}$ in $X_0$ such that $H_{\bar{b}} \leq V \leq W$. But this is a contradiction because $X_0$ is closed under $f_{\bar{b}}$, that is,  $f_{\bar{b}} \in H_{\bar{b}} \setminus W$. Therefore there exists a tuple $\bar{a}$ such that $G_{\bar{a}} \leq W$.

In order to prove Point (2), fix a countable chain of subgroups $W_1 \leq W_2 \leq \ldots \leq G$ such that $G=\bigcup_k W_k$.  To show that there exist $\bar{a}$, and $k \in \NN$ such that $G_{\bar{a}} \leq W_k$, assume the contrary, and for every tuple $\bar{a}$ and $k \in \NN$ fix $f_{\bar{a},k}$ such that  $f_{\bar{a},k} \in G_{\bar{a}}$, and $f_{\bar{a},k} \not \in W_k$. Then, using \cite[Corollary 4.4]{Sa} again, fix a countable, dense $X_0 \prec X$ closed under $f_{\bar{a},k}$ for tuples $\bar{a}$ in $X_0$ and $k \in \NN$, and let $H=\Aut(X_0)$ be regarded as a subgroup of $G$ (formally, \cite[Corollary 4.4]{Sa} does not allow for the extra index $k$ but its proof, applied verbatim, shows that it is possible to find such $X_0$.) Then $V_k=W_k \cap H$ is a chain of subgroups of $H$ such that $\bigcup_k V_k=H$. By \cite[Lemma 5.1]{Sa}, there exists $k \in \NN$ such that either $V_k$ is open in $H$ (in the product topology) or there exists $k$ such that $H=V_k$. In any case, there exists a tuple $\bar{b}$ in $X_0$ such that $H_{\bar{b}} \leq W_k \cap H$. As before, this leads to a contradiction, so there must exist $\bar{a}$, and $k$ such that $G_{\bar{a}} \leq W_k$. 
\end{proof}

\begin{lemma} \label{le:2in1}
Let $X$ be a homogeneous metric structure, and let $G=\Aut(X)$. Suppose that $W \subseteq G$ is a symmetric set such that there exists a tuple $\bar{a}$ in $X$, and $\delta>0$ such that $G_{\bar{a}} \subseteq W$, and $W[\bar{a}]$ is relatively $\delta$-saturated over $\bar{a}$. 
Then $W^3$ contains an open neighborhood in $G$. 
\end{lemma}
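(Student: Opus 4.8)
The plan is to prove the slightly stronger statement that the basic open neighborhood of the identity
\[
U=\{g\in G:\ d(g(a^i),a^i)<\delta\ \text{for all}\ i\le n\}
\]
is contained in $W^3$; since $\Id\in U$ and $\bar a$ is a finite tuple (so $U$ is indeed open in the topology of pointwise convergence), this gives the lemma. So fix $g\in U$ and set $q=\qftp{g(\bar a)/\bar a}$. The first observation is that $q$ is a $\delta$-quantifier-free type over $\bar a$ that is realized in $X$: it is realized by $g(\bar a)$ itself; since $g$ is an automorphism, $g(\bar a)$ and $\bar a$ satisfy the same quantifier-free formulas, so $\qftp{\bar a}\subseteq q$; and $d(a^i,g(a^i))=d_i$ with $0\le d_i<\delta$ for each $i\le n$ by the choice of $g$, which is exactly the remaining defining clause.

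The second step uses the hypothesis. By relative $\delta$-saturation of $W[\bar a]$ over $\bar a$, the type $q$ is realized in $W[\bar a]$, i.e.\ there is $w\in W$ with $w(\bar a)\models q$. Since quantifier-free types over a tuple are complete, this forces $\qftp{w(\bar a)/\bar a}=\qftp{g(\bar a)/\bar a}$, equivalently the pair $(w(\bar a),\bar a)$ has the same quantifier-free type as $(g(\bar a),\bar a)$. Hence the map fixing $\bar a$ pointwise and sending each $w(a^i)$ to $g(a^i)$ is a well-defined partial isomorphism between the finitely generated substructures these tuples generate, so by homogeneity of $X$ it extends to some $h\in G$. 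By construction $h$ fixes $\bar a$ pointwise, so $h\in G_{\bar a}$, and $hw(\bar a)=g(\bar a)$.

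The last step is pure bookkeeping. From $hw(\bar a)=g(\bar a)$ we get $(hw)^{-1}g\in G_{\bar a}\subseteq W$, and therefore
\[
g=h\cdot w\cdot\big((hw)^{-1}g\big)\in W\cdot W\cdot W=W^3,
\]
using $h\in G_{\bar a}\subseteq W$ and $w\in W$. Thus $U\subseteq W^3$, as desired.

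I do not expect a serious obstacle: once the right open set $U$ is identified the argument is short and direct. The only points that need care are the two model-theoretic moves — that a type realized in $W[\bar a]$ is \emph{literally equal} to $\qftp{g(\bar a)/\bar a}$ (completeness of quantifier-free types, together with a small check that the induced finite map is well defined), and that this finite partial isomorphism extends to a global automorphism (homogeneity of $X$). It is also worth noting why the power $3$ is the natural bound here: relative $\delta$-saturation only produces a tuple realizing the type of $g(\bar a)$ over $\bar a$, not $g(\bar a)$ itself, so one factor from $G_{\bar a}\subseteq W$ is spent correcting $w$ onto $g(\bar a)$ via $h$, and a second one to absorb $g$ itself into the stabilizer; symmetry of $W$ is in fact not needed along this route, though it is harmless to assume.
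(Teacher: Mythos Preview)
Your proof is correct and follows essentially the same route as the paper's: pick the basic open neighborhood of the identity determined by $\bar a$ and $\delta$, use relative $\delta$-saturation to find $w\in W$ with $\qftp{w(\bar a)/\bar a}=\qftp{g(\bar a)/\bar a}$, apply homogeneity to get an element of $G_{\bar a}\subseteq W$ aligning the two tuples, and read off $g\in W^3$. The only cosmetic difference is that the paper inverts the witness for $\bar c\in W[\bar a]$ (using symmetry of $W$) to map $\bar c$ back to $\bar a$, whereas your factorization $g=h\cdot w\cdot\big((hw)^{-1}g\big)$ sidesteps that inversion, so your closing remark that symmetry is not actually needed is justified.
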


\begin{proof}
Fix $\bar{a}$ and $\delta$ as in the statement of the lemma. Let $V \subseteq G$ be an open set such that $d(g(\bar{a}),\bar{a}) < \delta$ for $g \in V$. Fix $g \in V$. Because $W[\bar{a}]$ is relatively $\delta$-saturated over $\bar{a}$, there exists a tuple $\bar{c} \in W[\bar{a}]$ such that
\[ \qftp{\bar{c}/\bar{a}}=\qftp{g(\bar{a})/\bar{a}}. \]

Because $X$ is homogeneous, there exists $w_1 \in G_{\bar{a}} \subseteq W$ such that 
\[ w_1g(\bar{a})=\bar{c}. \]
Fix $w_2 \in W$ such that $w_2(\bar{c})=\bar{a}$. Then
\[ w_2w_1g(\bar{a})=\bar{a}, \]
that is,
\[ g \in w_1^{-1}w_2^{-1}G_{\bar{a}} \subseteq W^3. \]

As $g \in V$ was arbitrary, we have that $ V \subseteq W^3$. 
\end{proof}


\begin{theorem} \label{th:Main}
Let $X$ be a complete, homogeneous metric structure that has locally finite automorphisms and the extension property, and let $G=\Aut(X)$. If every relevant tuple in $X$ is ample, then 
\begin{enumerate}
\item $G$ the small index property,
\item $G$ has the automatic continuity property,
\item $G$ has uncountable cofinality for non-open subgroups.
\end{enumerate}
\end{theorem}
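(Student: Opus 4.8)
The plan is to dispatch (1) and (3) directly from Lemma \ref{th:1}, and to prove (2) by showing that $G$ has the Steinhaus property and then invoking the Rosendal--Solecki theorem that every Steinhaus Polish group has the automatic continuity property. The ampleness hypothesis will be used only for (2). For (1): if $W\le G$ has small index, Lemma \ref{th:1}(1) yields a finite tuple $\bar a$ with $G_{\bar a}\le W$; since stabilizers of finite tuples are basic open neighbourhoods of the identity in the topology of pointwise convergence, $W$ is open. For (3): given a countable chain $W_1\le W_2\le\cdots$ with $\bigcup_k W_k=G$, Lemma \ref{th:1}(2) gives a finite tuple $\bar a$ and an index $k$ with $G_{\bar a}\le W_k$, so $W_k$ contains an open neighbourhood of the identity and is therefore open; hence no countable exhaustive chain of \emph{non-open} subgroups of $G$ can exist.

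For (2) I would first establish the symmetric, countably syndetic analogue of Lemma \ref{th:1}(1): there is a universal constant $m_0$ such that for every symmetric countably syndetic $W\subseteq G$ some finite tuple $\bar a$ satisfies $G_{\bar a}\subseteq W^{m_0}$. This is \cite[Lemma 6.1]{Sa}; alternatively it follows by the proof of Lemma \ref{th:1}, as follows. Assuming no such $\bar a$ exists, choose $f_{\bar a}\in G_{\bar a}\setminus W^{m_0}$ for every tuple $\bar a$, use \cite[Corollary 4.4]{Sa} to obtain a countable dense elementary submodel $X_0\prec X$ closed under all these maps, observe that for $H=\Aut(X_0)\le G$ the set $W^2\cap H$ is symmetric and countably syndetic in $H$, apply the Steinhaus property of $H$ (which has ample generics, with universal Steinhaus constant $k_0$) to get $H_{\bar b}\subseteq (W^2\cap H)^{k_0}\subseteq W^{2k_0}$ for some finite tuple $\bar b$ in $X_0$, and reach a contradiction from $f_{\bar b}\in H_{\bar b}$; thus $m_0=2k_0$ works.

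Now fix a symmetric countably syndetic $W\subseteq G$ and a finite tuple $\bar a$ with $G_{\bar a}\subseteq W^{m_0}$, and replace $\bar a$ by a relevant tuple $\bar b$ with $G_{\bar b}\le G_{\bar a}$, so that $\bar b$ is ample by hypothesis and still $G_{\bar b}\subseteq W^{m_0}$. Put $q=\qftp{\bar b}$. Since $q(X)=G[\bar b]$ is Polish and $G=\bigcup_j g_jW$, some set $(g_jW)[\bar b]$ is non-meager in $q(X)$, and translating by $g_j^{-1}$ shows $W[\bar b]$ is non-meager. Ampleness of $\bar b$ makes the set of $\bar c\in q(X)$ with $G_{\bar c}[\bar b]$ relatively $\delta$-saturated over $\bar b$ for some $\delta>0$ comeager, so it meets $W[\bar b]$: there are $w\in W$ and $\delta>0$ with $\bar c:=w(\bar b)$ and $G_{\bar c}[\bar b]$ relatively $\delta$-saturated over $\bar b$. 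A short conjugation computation, using $w,w^{-1}\in W$, $G_{\bar c}=wG_{\bar b}w^{-1}$, and $G_{\bar b}\subseteq W^{m_0}$, shows that the symmetric set $W':=W^{m_0+4}$ satisfies $G_{\bar c}\subseteq W^{m_0+2}\subseteq W'$ and, after transporting the saturation statement along the automorphism $w$ (so that $W'[\bar c]\supseteq w^{-1}\big(G_{\bar c}[\bar b]\big)$ up to the bookkeeping above), that $W'[\bar c]$ is relatively $\delta$-saturated over $\bar c$. Lemma \ref{le:2in1} applied to $W'$ and $\bar c$ then gives that $(W')^3=W^{3m_0+12}$ contains an open neighbourhood of the identity; thus $G$ is Steinhaus and (2) follows.

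The Baire-category step and the exponent bookkeeping are routine. The main obstacle is the step where ampleness is actually put to work: arranging one tuple $\bar c$ and one fixed power $W'$ of $W$ for which \emph{both} hypotheses of Lemma \ref{le:2in1} hold simultaneously --- the inclusion $G_{\bar c}\subseteq W'$ coming from the countably syndetic analogue of Lemma \ref{th:1}, and the relative $\delta$-saturation of $W'[\bar c]$ over $\bar c$ coming from ampleness together with the non-meagerness of $W[\bar b]$ --- while keeping the powers of $W$ under control as the saturation statement is moved from $\bar b$ to $\bar c=w(\bar b)$ by conjugation.
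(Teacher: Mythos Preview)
Your proofs of (1) and (3) rest on a false premise: you claim that pointwise stabilizers $G_{\bar a}$ are basic open neighbourhoods of the identity in the topology of pointwise convergence. That is true for automorphism groups of \emph{discrete} structures, but not for the metric structures at hand. The basic neighbourhoods of the identity in $\Aut(X)$ are the sets $\{g : d(g(a^i),a^i)<\epsilon \text{ for all } i\le n\}$; the stabilizer $G_{\bar a}$ is their intersection over all $\epsilon>0$, hence closed but in general not open. Concretely, in $\Iso(\UU)$, $\Aut(\MA)$, or $U(\ell_2)$ there are automorphisms arbitrarily close to the identity that move a given tuple by a small nonzero amount. So the inclusion $G_{\bar a}\le W$ furnished by Lemma~\ref{th:1} does \emph{not} by itself make $W$ open, and ampleness is needed for all three parts, not only for (2). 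Indeed, bridging the gap from ``$W$ contains a point stabilizer'' to ``a bounded power of $W$ contains a genuine open neighbourhood'' is exactly what Lemma~\ref{le:2in1} together with the ampleness hypothesis accomplishes.

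The paper's argument for (1) runs as follows: from $G_{\bar a}\le W$ with $\bar a$ ample, use that a small-index subgroup is non-meager and apply the Effros theorem to get $W[\bar a]$ non-meager in $p(X)$; ampleness then yields $w\in W$ and $\delta>0$ with $G_{w(\bar a)}[\bar a]$ relatively $\delta$-saturated over $\bar a$; since $G_{w(\bar a)}=wG_{\bar a}w^{-1}\le W$, the set $W[\bar a]\supseteq G_{w(\bar a)}[\bar a]$ is itself relatively $\delta$-saturated over $\bar a$, and Lemma~\ref{le:2in1} applied at $\bar a$ gives $W=W^3$ open. Parts (2) and (3) follow the same template, the only change being the source of non-meagerness (countably syndetic sets, respectively cofinally many terms of an exhaustive chain, are non-meager). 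Your treatment of (2) is along these correct lines but more convoluted than necessary: the transport of the saturation statement from $\bar b$ to $\bar c=w(\bar b)$ is superfluous, since once $G_{w(\bar b)}\subseteq W^{m_0+2}$ you already have both $G_{\bar b}\subseteq W^{m_0+2}$ and $W^{m_0+2}[\bar b]\supseteq G_{w(\bar b)}[\bar b]$ relatively $\delta$-saturated over $\bar b$, so Lemma~\ref{le:2in1} applies directly at $\bar b$.
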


\begin{proof}
We prove Point (1). Fix a subgroup $W \leq G$ with small index. By Lemma \ref{th:1}, there exists an ample relevant tuple $\bar{a}$ in $X$, with $p=\qftp{\bar{a}}$, such that $G_{\bar{a}} \subseteq W$. 
As $W$ is non-meager in $G$ (see remarks on page 5 of \cite{Be}), and $p(X)$ is a Polish space, the Effros theorem (see \cite[Theorem 3.2.4]{Gao}) implies that  $W[\bar{a}]$ is non-meager in $p(X)$. Since $\bar{a}$ is ample, there exists $w \in W$, and $\delta>0$ such that $G_{w(\bar{a})}[\bar{a}]$ is relatively $\delta$-saturated over $\bar{a}$. But $wG_{\bar{a}}w^{-1}=G_{w(\bar{a})}$, so $G_{w(\bar{a})} \leq W$. By Lemma \ref{le:2in1}, $W$ contains an open neighbourhood in $G$, that is, $W$ is open in $G$.

In order to prove Point (2), fix a symmetric, and countably syndetic $W \subseteq G$. Since $W$ is non-meager in $G$, arguing exactly as above, we can show that $W^{36}$ contains an open neighbourhood in $G$. As $W$ was arbitrary, $G$ is Steinhaus, and so it has the automatic continuity property (see \cite[Proposition 2]{RoSo}.)

Point (3) can be proved in the same way because for every chain $W_0 \leq W_1 \leq \ldots \leq G$ such that $\bigcup_k W_k=G$, almost all $W_k$ are non-meager in $G$.
\end{proof}

\section{Triviality of homomorphisms}

We denote by $\SI$ the infinite symmetric group with the product topology.

\begin{definition} \label{de:IHS}
Let $X$ be a homogeneous metric structure, and let $G=\Aut(X)$. A tuple $\bar{a}$ in $X$, with $p=\qftp{\bar{a}}$, is called \emph{homogeneously isolated} if there exist $X_k \subseteq X$, $\bar{a}_k \in p(X)$, $k \in \NN$, such that for every $k$
\begin{enumerate}
\item $X_k$ is a homogeneous substructure of $X$,
\item for all $\phi_l \in \Aut(X_l)$, $l \in \NN$, there exists $\phi \in \Aut(X)$ that extends each $\phi_l$,
\item there exists a homomorphic embedding $e:\SI \rightarrow G$ such that for every $\sigma \in \SI$ and $k,l \in \NN$
\[ \sigma(k)=l \Leftrightarrow e(\sigma)[X_k]=X_l. \]
\end{enumerate}
\end{definition}

\begin{theorem} \label{th:Triv}
Let $X$ be a complete, homogeneous metric structure that has locally finite automorphisms and the extension property, and let $G=\Aut(X)$. If every tuple $\bar{a}$ in $X$ is homogeneously isolated, then every homomorphism $\Phi: G \rightarrow K$, where $K$ is a separable group with a left-invariant, complete metric, is trivial.
\end{theorem}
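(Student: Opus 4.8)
The plan is to prove that $\ker\Phi=G$. First I would dispose of continuity: since $K$ is separable, it suffices to know that $G$ has the automatic continuity property, and this can be extracted from the hypotheses — either by observing that the amalgamation encoded in condition~(2) of Definition~\ref{de:IHS} supplies the relative $\delta$-saturation needed in Lemma~\ref{le:2in1}, so that $G$ is Steinhaus, or by checking that ``homogeneously isolated'' forces every relevant tuple to be ample and quoting Theorem~\ref{th:Main}. Either way I may assume $\Phi$ is continuous, so $\ker\Phi$ is a \emph{closed} subgroup of $G$; it then suffices to produce a \emph{dense} subgroup of $G$ that is contained in $\ker\Phi$.

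Next, fix an arbitrary tuple $\bar a$ and the associated data $X_k$, $\bar a_k$ and $e\colon\SI\to G$ from Definition~\ref{de:IHS}, and look at $\Phi\circ e\colon\SI\to K$. This is continuous because $\SI$ has ample generics, hence automatic continuity, and I claim it is trivial. The only closed normal subgroups of $\SI$ are $\{1\}$ and $\SI$, so it is enough to rule out a continuous injection of $\SI$ into a group with a complete left-invariant metric; but $\SI$ is Roelcke precompact, so the closure of its image would be a Roelcke precompact closed subgroup of $K$, hence compact, while $\SI$ admits no nontrivial continuous homomorphism into a compact group. (One may also argue concretely, playing a sequence of permutations converging to a shift in the monoid of self-injections of $\NN$ against the conjugacy of permutations containing a bi-infinite cycle; this is the one place where completeness of the metric on $K$ is used.) Hence $e(\SI)\subseteq\ker\Phi$, so $\Phi$ is invariant under $e(\SI)$-conjugation.

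Now comes the heart of the argument. Fix $h\in\Aut(X_0)$. For each $S\subseteq\NN$, use condition~(2) of Definition~\ref{de:IHS} to choose $g_S\in G$ which, for $l\in S$, restricts on $X_l$ to the copy of $h$ obtained by transporting along $e(\SI)$ (recall all $X_l$ are isomorphic, via $e$ applied to transpositions, by condition~(3)), and restricts to the identity on $X_l$ for $l\notin S$, coherently also off $\bigcup_l X_l$. Using the homogeneity of the substructures $X_l$ and the independence in condition~(2), one checks that $S\mapsto\Phi(g_S)$ is multiplicative over disjoint unions, i.e.\ $\Phi(g_{S\sqcup T})=\Phi(g_S)\Phi(g_T)$. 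By condition~(3), $e(\sigma)\,g_S\,e(\sigma)^{-1}=g_{\sigma(S)}$, so by the previous paragraph $\Phi(g_S)$ depends only on the pair of cardinals $(|S|,|\NN\setminus S|)$. Writing the set $E$ of even numbers as $\{0\}\sqcup(E\setminus\{0\})$ and noting that $E\setminus\{0\}$ has the same pair of cardinals as $E$, multiplicativity gives $\Phi(g_E)=\Phi(g_{\{0\}})\Phi(g_E)$, whence $\Phi(g_{\{0\}})=1_K$. Thus $\Phi$ kills the chosen extension of $h$ (the element $g_{\{0\}}$, which acts as $h$ on $X_0$ and trivially off it), and, transporting along $e(\SI)$, the analogous extensions of automorphisms of every $X_l$.

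Finally, letting $\bar a$ range over all tuples and $l$ over $\NN$, the elements produced above together with $e(\SI)$ generate a subgroup $G_0\le\ker\Phi$. Given a basic open set $\{g:g(\bar b)=\bar c\}$ of $G$, homogeneity, the extension property and local finiteness of automorphisms let one find a tuple whose associated $X_0$ contains $\bar b$ and $\bar c$ and an automorphism $h$ of that $X_0$ realizing $\bar b\mapsto\bar c$; its extension lies in $G_0$, so $G_0$ is dense in $G$. Since $\ker\Phi$ is closed and contains the dense subgroup $G_0$, we get $\ker\Phi=G$, i.e.\ $\Phi$ is trivial. The step I expect to be genuinely delicate is the bookkeeping in the third paragraph: one must set up the transported copies of $h$ so that $S\mapsto\Phi(g_S)$ is honestly multiplicative over disjoint unions and honestly $e(\SI)$-invariant — not merely invariant up to conjugacy — and this is exactly where the homogeneity of the $X_l$ and the amalgamation of condition~(2) of Definition~\ref{de:IHS} must be used with care, together with the $\SI$-rigidity established in the second paragraph.
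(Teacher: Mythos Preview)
Your outline has a genuine gap at the step you yourself flag as ``delicate'': condition~(2) of Definition~\ref{de:IHS} is a bare existence statement, and it gives you no coherence between different extensions. Concretely, for disjoint $S,T$ the elements $g_S g_T$ and $g_{S\sqcup T}$ both extend the same family $\{\phi_l\}$ on the $X_l$, but they may differ by an automorphism fixing each $X_l$ pointwise, and nothing in the hypotheses places such automorphisms in $\ker\Phi$. The same problem blocks $e(\sigma)g_S e(\sigma)^{-1}=g_{\sigma(S)}$: conjugation gives another extension of the transported family, not necessarily the one you chose. So neither multiplicativity nor $e(\SI)$-equivariance of $S\mapsto\Phi(g_S)$ follows, and the infinite-product trick collapses. (In the concrete examples $\UU$ and $\ell_2$ the extensions are essentially canonical, which is why this style of argument works there; but that is extra structure not recorded in Definition~\ref{de:IHS}.) A secondary issue: you assume automatic continuity up front by asserting that homogeneous isolation yields ampleness or Steinhausness, but this is not proved anywhere and is not obvious---Theorem~\ref{th:Main} takes ampleness as a separate hypothesis.

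The paper's proof avoids both problems by never asking for a coherent family of extensions and never invoking continuity of $\Phi$. It fixes an arbitrary neighbourhood $U$ of $1_K$, pulls back a smaller $V$ to a countably syndetic $W\subseteq G$, and uses \cite[Lemma~6.1]{Sa} to get $G_{\bar a}\subseteq W^{10}$. For a given $g\in G$ it applies homogeneous isolation to $(\bar a,g(\bar a))$, uses triviality of $\Phi\!\upharpoonright\!\SI$ to arrange that the translates $b_kWf_k^{-1}$ cover $G$, and then invokes condition~(2) \emph{once}, contrapositively, to show some $f_kW^2f_k^{-1}$ is full on $q(X_k)$; this yields $g\in W^{12}$, hence $\Phi(g)\in U$. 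The only use of condition~(2) is to produce a single $\phi$ from a single family $\{\phi_l\}$, so no compatibility between different choices is ever needed.
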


\begin{proof}
Let $\Phi: G \rightarrow K$ be a homomorphism into a separable group $K$ with a left-invariant, complete metric. Fix a symmetric neighborhood of the identity $U$  in $K$. Fix a symmetric neighborhood $V$ such that $V^{12} \subseteq U$. Let $W=\Phi^{-1}(V)$. Then $W$ is symmetric and countably syndetic in $G$ so, by \cite[Lemma 6.1]{Sa}, there exists a tuple $\bar{a}$ in $X$, with $q=\qftp{\bar{a}}$, such that $G_{\bar{a}} \subseteq W^{10}$. Fix $g \in G$, and $X_k$,  $\bar{\alpha}_k$, $k \in \NN$, witnessing that $(\bar{a}, g(\bar{a}))$ with $p=\qftp{(\bar{a}, g(\bar{a}))}$ is homogeneously isolated.

Without loss of generality we can assume that the embedding $e$ from Condition (3) is the identity. Fix $f \in G$ such that $f(\bar{a}, g(\bar{a}))$ is a tuple in $X_1$ (this is possible because $\bar{\alpha}_1 \in p(X_1)$, and $X$ is homogeneous), and fix $\sigma_k \in \SI$, $k \in \NN$, such that $f_k(\bar{a},g(\bar{a})) \in X_k$ for $f_k=\sigma_k f$. 

\emph{Claim:} There exists $k \in \NN$ such that $f_k W^2 f_k^{-1}$ is full on $q(X_k)$.

To show the claim, recall that $\SI$ has a unique non-trivial separable group topology (see \cite[Theorem 1.11]{KeRo}.) Also, it is well known that $\SI$ has no non-trivial closed normal subgroups. As $\SI$ has ample generics (see \cite{KeRo}), and thus $\Phi$ restricted to $\SI$ is continuous, $\Phi[\SI]$ is either trivial or it is an isomorphic copy of $\SI$ in $K$. But $K$ has a left-invariant, complete metric, while it is well known that $\SI$ does not admit such a metric. Therefore $\Phi[\SI]$ must be the trivial subgroup of $K$.

Now we prove that there exist $b_k \in G$, $k \in \NN$, such that $G=\bigcup_k b_k W f_k^{-1}$. Actually, it is easy to see that it suffices to find $a_k \in K$, $k \in \NN$, such that $K=\bigcup_k a_k V \Phi(f_k^{-1})$. But 
\[ \Phi(f_k^{-1})=\Phi( f^{-1}\sigma^{-1}_k)=\Phi(f^{-1})\Phi(\sigma^{-1}_k) =\Phi(f^{-1}) \]
for every $k$, so we can find such $a_k$ because $V$ is open, and $K$ is separable.

Observe that there exists $k \in \NN$ such that $b_k W f_k^{-1}$ is full on $q(X_k)$. If not, then for each $k$ there is $\phi_k \in \Aut(X_k)$ such that $\phi_k \neq g \upharpoonright X_k$ for every $ g \in b_kW  f_k^{-1}$. Fix an automorphism $\phi \in \Aut(X)$ such that $\phi \upharpoonright X_k =\phi_k$ for $k \in \NN$. Then $\phi \not \in \bigcup_k b_kW f_k^{-1}$, which is a contradiction. Now, if $b_k W f_k^{-1}$ is full on $q(X_k)$, then so is $f_k W^2 f_k^{-1} = (b_kW f_k^{-1})^{-1}(b_kW f_k^{-1})$ as for every tuple in $q(X_k)$ the set $b_kW_k f_k^{-1}$ contains an element that acts trivially on it. This finishes the proof of the claim.

Fix $k$ such that $f_k W^2 f_k^{-1}$ is full on $q(X_k)$. Fix $w \in W^2$ such that
\[ f_kw f_k^{-1} f_kg(\bar{a})=f_k(\bar{a}), \]
that is,
\[ f_k wgf_k^{-1}(\bar{a}) \in G_{f_k(\bar{a})},  \]
and
\[ wg \in G_{\bar{a}}. \]
Finally,
\[ g \in w^{-1}G_{\bar{a}} \leq W^{12}, \]
and
\[ \Phi(g) \in V^{12} \subseteq U. \]

Since $U$ and $g$ were arbitrary, this shows that $\Phi$ is trivial
\end{proof}

\section{The Urysohn space}

For every Polish metric space $X$ there exists a universal and homogeneous Polish metric space $\UU(X)$ such that every isometry of $X$ uniquely extends to an isometry of $\UU(X)$ (see \cite{Ka}.) As a matter of fact, there exists only one, up to isometry, universal and homogeneous Polish metric space, and it is called the \emph{Urysohn space}. We denote it by $\UU$, its metric by $d$, and view $\UU$ as a metric structure $(\UU,d)$. 

It is well known that $\UU$ has the following property which we call here the \emph{universal extension property}: for every finite metric space $B$, and $A \subseteq B$, every embedding $e:A \rightarrow \UU$ can be extended to an embedding of $B$ into $\UU$.

In the Urysohn space, tuples of pairwise distinct points are relevant.

\begin{lemma}
\label{le:SI:U}
Every tuple in $\UU$ consisting of distinct elements is ample.
\end{lemma}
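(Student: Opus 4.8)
The plan is to verify Definition \ref{de:SI} directly: given a tuple $\bar a = (a^1,\dots,a^n)$ of distinct points in $\UU$ with $p = \qftp{\bar a}$, I must show that the set of $\bar b \in p(\UU)$ for which $G_{\bar b}[\bar a]$ is relatively $\delta$-saturated over $\bar a$ for some $\delta > 0$ is comeager in $p(\UU)$. Since $\UU$ is homogeneous, $p(\UU) = G[\bar a]$ is a Polish space, so comeagerness is meaningful. In fact I expect to prove something stronger and cleaner: that \emph{every} $\bar b \in p(\UU)$ works, i.e. for every $\bar b$ realizing $p$ there is $\delta > 0$ with $G_{\bar b}[\bar a]$ relatively $\delta$-saturated over $\bar a$. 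By homogeneity it suffices to check this for $\bar b = \bar a$ itself.

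So the core task is: find $\delta > 0$ such that every $\epsilon$-quantifier-free type over $\bar a$ (with $\epsilon = \delta$) that is realized in $\UU$ is already realized by some tuple in $G_{\bar a}[\bar a]$ — that is, by some $\bar a'$ of the form $g(\bar a)$ with $g$ fixing $\bar a$ pointwise. Wait — but if $g$ fixes $\bar a$ then $g(\bar a) = \bar a$. The point must rather be: relatively $\delta$-saturated over $\bar a$ means every $\delta$-type over $\bar a$ realized in $\UU$ is realized in the set $G_{\bar a}[\bar a]$, and here the relevant tuples are the \emph{longer} tuples — one applies the definition with $\bar a$ playing the role of the base and a potentially larger ambient consideration. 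Let me restate correctly: I take $\delta = \tfrac12 \min_{i \ne j} d(a^i,a^j)$, the separation of the tuple. A $\delta$-quantifier-free type $p(\bar x / \bar a)$ over $\bar a$ extends $\qftp{\bar a}$ and forces $d(a^i, x^i) = d_i < \delta$. Given such a type realized by some $\bar c$ in $\UU$, I want $w \in G_{\bar a}$ with $w(\bar a)$... no: I want a realization of the type inside $G_{\bar a}[\bar a]$, but $G_{\bar a}[\bar a] = \{\bar a\}$.

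Let me reconsider what the definition is actually asking. Re-reading Definition \ref{de:SI}: $\bar b$ ranges over $p(X)$, and the condition is on $G_{\bar b}[\bar a]$ — the orbit of the \emph{fixed} tuple $\bar a$ under the stabilizer of $\bar b$. Since $\bar b \in p(X) = G[\bar a]$, write $\bar b = h(\bar a)$; then $G_{\bar b} = h G_{\bar a} h^{-1}$ and $G_{\bar b}[\bar a] = h G_{\bar a}[h^{-1}(\bar a)]$, a genuinely nontrivial set. The cleanest route: show that for \emph{generic} $\bar b$, the stabilizer $G_{\bar b}$ still moves $\bar a$ around enough. Concretely I would fix a countable dense set and show that for comeager-many $\bar b$, given any small perturbation type of $\bar a$ realized in $\UU$, the universal extension property of $\UU$ lets me build an isometry fixing $\bar b$ and realizing that perturbation: one amalgamates the finite metric space $\bar b \cup \bar a$ with a copy of $\bar b \cup \bar a'$ (where $\bar a'$ realizes the perturbation type) over $\bar b$, using that the perturbation is small relative to distances within $\bar b$ and between $\bar b$ and $\bar a$, so the amalgam is a legitimate metric space; then extend to $\UU$ by the universal extension property, and finally extend to a global isometry by homogeneity. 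The main obstacle is purely metric/combinatorial: verifying that the relevant amalgam of finite metric spaces satisfies the triangle inequality, which is where the choice of $\delta$ (roughly half the minimal gap among all the finitely many distances involved, possibly depending on $\bar b$, which is why one gets comeagerness rather than "all $\bar b$") must be made carefully. Once that amalgamation goes through, ampleness follows immediately, and in fact I expect the set of good $\bar b$ to be not merely comeager but all of $p(\UU)$, with $\delta$ depending only on $\bar a$.
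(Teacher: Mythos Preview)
Your overall strategy --- realize the perturbation type inside $\UU$ via the universal extension property, then promote the resulting finite partial isometry to an element of $G_{\bar b}$ via homogeneity --- is exactly the right shape, and it is essentially what the paper does. But two concrete points in your write-up are wrong or unresolved, and they are precisely where the work lies.

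First, your closing expectation that \emph{every} $\bar b\in p(\UU)$ works (with $\delta$ depending only on $\bar a$) is false. If $\bar b$ shares a point with $\bar a$, say $b^1=a^1$, then every $g\in G_{\bar b}$ fixes $a^1$, so $G_{\bar b}[\bar a]\subseteq\{a^1\}\times\UU^{n-1}$; no $\delta$-type with $d(x^1,a^1)=d_1>0$ can be realized there, and such types are realized in $\UU$. So the comeager set in the paper is taken to be the open dense set $T$ of $\bar b\in p(\UU)$ \emph{disjoint} from $\bar a$, and $\delta$ genuinely depends on $\bar b$ (it is chosen below $\tfrac14\min_{i,j}d(a^i,b^j)$ as well as below $\tfrac14\min_{i\ne j}d(a^i,a^j)$).

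Second, even for disjoint $\bar b$, your proposed ``amalgamate $\bar b\cup\bar a$ with $\bar b\cup\bar a'$ over $\bar b$, with $d(a^i,a'^j)$ prescribed by the given type'' need not produce a metric. The dangerous triangle is $a^i,b^j,a'^k$: you are forcing $d(a'^k,b^j)=d(a^k,b^j)$ and $d(a'^k,a^i)=r_{ki}$ from the type, and you need $|d(a^i,b^j)-d(a^k,b^j)|\le r_{ki}$. Since $r_{ki}$ can be as small as $d(a^i,a^k)-\delta$, this fails whenever the triangle $a^i,a^k,b^j$ is degenerate (i.e.\ $|d(a^i,b^j)-d(a^k,b^j)|=d(a^i,a^k)$). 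Making $\delta$ small does not help here. You would have to further shrink the comeager set of admissible $\bar b$ to those with all such triangles strictly non-degenerate, and then argue density --- doable, but not what you wrote.

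The paper sidesteps this verification entirely. For $\bar b\in T$ and suitable $\delta$, it builds an auxiliary metric space $X$ consisting of $\bar b$ together with (isometric copies $W^i$ of) $\delta$-spheres about the $a^i$, declaring every point of $W^i$ to have the \emph{same} distance to each $b^j$ as $a^i$ does; the $4\delta$ margin guarantees this is a metric. Then, invoking Kat\v etov, one replaces $\UU$ by $\UU(X)$. Now any tuple $\bar e$ with $e^i\in W^i$ automatically satisfies $d(e^i,b^j)=d(a^i,b^j)$, so by homogeneity there is $\phi\in G_{\bar b}$ with $\phi(\bar a)=\bar e$; one then only has to realize the prescribed type $\qftp{\bar c/\bar a}$ by some $\bar e$ with $e^i\in W^i$, which is a clean application of the universal extension property over $\bar a\cup\phi_0(\bar a)$. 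In other words, rather than checking a delicate amalgam, the paper engineers the ambient space so that the needed equalities $d(\,\cdot\,,b^j)=d(a^i,b^j)$ hold on an entire $\delta$-ball's worth of candidates.
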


\begin{proof}
In order to simplify notation, we will identify ordered tuples of distinct elements with unordered (but indexed) tuples, that is, sets. Fix an $n$-tuple $\bar{a}=\{a^1, \ldots, a^n\}$ in $\UU$ consisting of distinct elements, and let $p=\qftp{\bar{a}}$.  Let $T$ be the set of all the tuples $\bar{b} \in p(\UU)$ that are disjoint from $\bar{a}$. We show that $T$ witnesses that $\bar{a}$ is ample.

Clearly, $T$ is open and dense, and so comeager. Fix $\bar{b}=\{b^1, \ldots, b^n\} \in T$, and $\delta>0$ such that $d(a^i,b^j)>4\delta$, for all $i, j \leq n$, and $d(a^i,a^j)>4\delta$ for all $i \neq j$. Let $(X,\vartheta)$ be a metric space defined as follows. First of all, find $W^i \subseteq \UU$ with $a^i \in W^i$, $i \leq n$, that are isometric to the sphere in $\UU$ of radius $\delta$ centered at $a^i$, respectively. This can be easily done in the following way.

Put $V^i$ to be the sphere in $\UU$ of radius $\delta$ centered at $a^i$, $i \leq n$, fix an $n$-tuple $\bar{c}=\{c^1, \ldots, c^n \}$ disjoint from $\bar{a}$, and define a metric $\vartheta_0$ on $\bar{a} \cup \bar{c}$ so that $\vartheta_0$ agrees with $d$ on $\bar{a}$, $\vartheta_0(a^i,c^i)=\delta$, and $\vartheta_0(c^i,a^j)=\vartheta_0(c^i,c^j)=d(a^i,a^j)$ for $i,j \leq n$ with $i \neq j$. By the universal extension property, the identity embedding of $\bar{a}$ into $\UU$ can be extended to $(\bar{a} \cup \bar{c}, \vartheta_0)$, so we can assume that actually $c^i \in V^i$. Using homogeneity of $\UU$, find an isometry $\phi_0$ of $\UU$ such that $\phi_0(c^i)=a^i$, and put $W^i=\phi_0[V^i]$, $i \leq n$.

Now put $W=W^1 \cup \ldots \cup W^n$, $X=W \cup \{b^1, \ldots, b^n \}$, and let $\vartheta_1$ be a metric on $X$ such that, when restricted to $W$ or $\{b^1, \ldots, b^n\}$, it agrees with $d$, and is given by
\[ \vartheta_1(w,b^j)=d(a^i,b^j) \]
for every $i,j \leq n$ and $w \in W^i$.

We can assume that $\UU=\UU(X)$. Put $G=\Aut(\UU(X))$. Obviously, each $W^i$ is the sphere in $\UU$ of radius $\delta$ centered at $\phi_0(a^i)$. Fix an $n$-tuple $\bar{c}=\{c^1, \ldots, c^n\} \in p(\UU)$ such that $d(c^i,a^i)<\delta$, $i \leq n$, and fix a tuple $\bar{e}=\{e^1, \ldots, e^n\}$ that is disjoint from $X \cup \bar{c}$. Let $\vartheta_2$ to be a metric on $\bar{a} \cup \phi_0(\bar{a}) \cup \bar{e}$ that agrees with $d$ on $\bar{a} \cup \phi_0(\bar{a})$, and satisfies $\vartheta_2(e^i,e^j)=d(c^i,c^j)$, $\vartheta_2(e^i,a^j)=d(c^i,a^j)$, and $\vartheta_2(e^i,\phi_0(a^j))=d(a^i,\phi_0(a^j))$, $i,j \leq n$. By the universal extension property, we can assume that actually $e^i \in W^i$, $i \leq n$. Moreover, it is easy to verify that there exists $\phi_1 \in G_{\bar{b}}$ such that $\phi_1(\bar{a})=\bar{e}$. As $\bar{c}$ was arbitrary, this shows that $G_{\bar{b}}[\bar{a}]$ is relatively $\delta$-saturated over $\bar{a}$.
\end{proof}

\begin{theorem} \label{th:Ur}
The group $\Iso(\UU)$ has the automatic continuity property, the small index property, and uncountable cofinality.
\end{theorem}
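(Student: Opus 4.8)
The plan is to deduce Theorem~\ref{th:Ur} from Theorem~\ref{th:Main} by verifying that $\UU$ satisfies all the hypotheses of that theorem. So I would proceed by checking, one by one, the four things required: that $(\UU,d)$ is a complete, homogeneous metric structure; that it has locally finite automorphisms; that it has the extension property; and that every relevant tuple is ample. The last of these is already done — it is exactly Lemma~\ref{le:SI:U}, since in $\UU$ the relevant tuples are declared to be the tuples of pairwise distinct points, and every such tuple is ample. Completeness and homogeneity are built into the definition of $\UU$ as the (unique) universal homogeneous Polish metric space, so these require only a sentence.

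The substantive work is to verify \emph{locally finite automorphisms} and the \emph{extension property} for $\UU$, and here the key tool is the universal extension property recalled just before Lemma~\ref{le:SI:U}: any embedding of a subspace $A$ of a finite metric space $B$ into $\UU$ extends to an embedding of $B$. For locally finite automorphisms: given a finitely generated (hence finite) substructure $Y$ and finitely many partial automorphisms $\phi_1,\dots,\phi_k$ of $Y$, I would build a single finite metric space $Y'\supseteq Y$ on which all the $\phi_i$ become total automorphisms, by the standard trick of closing $Y$ under the partial maps and their inverses — iterate $\phi_i$ and $\phi_i^{-1}$ on points of $Y$, adding new abstract points with the metric forced by the isometry conditions, which after finitely many steps stabilizes because in a finite space each $\phi_i$ has finite order on its (growing) domain; then embed this finite space into $\UU$ over $Y$ using the universal extension property. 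For the extension property: given finite substructures $B,C$ with finite $A\subseteq B\cap C$, I would take a fresh isometric copy $C'$ of $C$ over $A$ (so $C'$ agrees with $C$ on $A$ and is otherwise ``as far as possible'' from $B$), realize $C'$ inside $\UU$ using the universal extension property applied to $B\cup C'$ over $B$, and check independence: any pair $\phi\colon B\to B$, $\psi\colon C'\to C'$ agreeing on $A$ glues to an isometry of $B\cup C'$ precisely because the cross-distances between $B\setminus A$ and $C'\setminus A$ were chosen by a symmetric amalgamation rule that both $\phi$ and $\psi$ respect. This ``free amalgamation'' behaviour of $\UU$ is classical, so I would keep the metric bookkeeping brief and cite the universal extension property as the engine.

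The main obstacle I anticipate is purely that of the extension property: one must exhibit a concrete amalgamation of $B$ and $C'$ over $A$ whose cross-distances are preserved by \emph{arbitrary} $\phi\in\Aut(B)$ and $\psi\in\Aut(C')$ fixing $A$ setwise and agreeing on $A$ — not just by the identity. The natural choice, $\vartheta(b,c')=\min_{a\in A}\bigl(d(b,a)+d(a,c')\bigr)$, is the largest metric extension and is manifestly invariant under any such $\phi\cup\psi$, since $\phi$ and $\psi$ permute $A$ in the same way; the only care needed is to confirm this formula genuinely defines a metric (the triangle inequality within $B\cup C'$) and does not collapse distinct points, which holds because $A$ is nonempty and the restrictions to $B$ and to $C'$ are already metrics. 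Once these verifications are in place, all hypotheses of Theorem~\ref{th:Main} hold for $X=\UU$, $G=\Iso(\UU)$, and the three conclusions — small index property, automatic continuity, and uncountable cofinality for non-open subgroups — follow immediately; and since $\Iso(\UU)$ is connected (it is a well-known fact that $\Iso(\UU)$ has no proper open subgroups, equivalently it is generated by any neighbourhood of the identity), uncountable cofinality for non-open subgroups upgrades to plain uncountable cofinality, giving the statement as phrased.
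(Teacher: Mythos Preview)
Your overall architecture matches the paper's proof exactly: apply Theorem~\ref{th:Main} together with Lemma~\ref{le:SI:U}, then use connectedness of $\Iso(\UU)$ to pass from uncountable cofinality for non-open subgroups to plain uncountable cofinality. The paper simply \emph{cites} the two structural hypotheses --- locally finite automorphisms from \cite{So} and the extension property from \cite[Lemma~8.9]{Sa} --- whereas you attempt to prove them directly. Your sketch for the extension property via the free metric amalgam $\vartheta(b,c')=\min_{a\in A}\bigl(d(b,a)+d(a,c')\bigr)$ is essentially correct (modulo the easy separate treatment of $A=\emptyset$).

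However, there is a genuine gap in your argument for locally finite automorphisms. You describe ``the standard trick of closing $Y$ under the partial maps and their inverses \ldots\ adding new abstract points with the metric forced by the isometry conditions, which after finitely many steps stabilizes because in a finite space each $\phi_i$ has finite order.'' Neither clause holds. First, the isometry conditions do \emph{not} force the metric on the new points: when you adjoin $\phi_i(y)$ for some $y$ outside the domain of $\phi_i$, only the distances $d(\phi_i(y),\phi_i(a))$ to images of points already in the domain are determined; distances to the rest of $Y$ (and to points added for other $\phi_j$) must be chosen, and choosing them consistently so that all triangle inequalities hold and the process eventually closes up is precisely the difficulty. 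Second, the termination claim is circular: you are invoking finiteness of the end result to argue that the process producing it terminates. The statement you need --- that any finite family of partial isometries of a finite metric space extends to automorphisms of a larger \emph{finite} metric space --- is exactly Solecki's theorem \cite{So}, the metric analogue of Hrushovski's theorem for graphs, and it is a substantial result that does not reduce to naive iteration. So either cite \cite{So} as the paper does, or be prepared to reproduce a real proof of that theorem; the sketch as written does not work.
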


\begin{proof}
The Urysohn space has locally finite automorphisms and the extension property (see \cite{So} and Lemma 8.9 in \cite{Sa}.) Therefore, by Theorem \ref{th:Main}, and Lemma \ref{le:SI:U}, we get that $\Iso(\UU)$ has  the automatic continuity property, the small index property, and uncountable cofinality for non-open subgroups. But $\Iso(\UU)$ is known to be connected, so it does not contain any non-trivial open subgroups. Thus, $\Iso(\UU)$ has uncountable cofinality as well.
\end{proof}

\begin{theorem}
Every tuple in $\UU$  is homogeneously isolated. In particular, every homomorphism $\Phi: \Iso(\UU) \rightarrow K$ into a separable topological group $K$ with a left-invariant, complete metric is trivial.
\end{theorem}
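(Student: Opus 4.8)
The plan is to verify, for an arbitrary tuple $\bar a$ in $\UU$ with $p=\qftp{\bar a}$, the three conditions of Definition \ref{de:IHS}, and then to apply Theorem \ref{th:Triv}; recall that $\UU$ is complete, homogeneous, and (as noted in the proof of Theorem \ref{th:Ur}) has locally finite automorphisms and the extension property. The witnessing substructures $X_k$ will be isometric copies of a \emph{bounded} Urysohn space, glued into $\UU$ at a uniform distance so that $\SI$ can act on them by permutation. Fix $r>0$ with $\diam(\bar a)\le r$, and let $V$ be the (unique, Polish, ultrahomogeneous) metric space universal for separable metric spaces of diameter $\le r$. Since the finite metric space $(\bar a,d)$ has diameter $\le r$ it embeds isometrically into $V$, and because the quantifier-free type of a tuple in $(\UU,d)$ is just its distance matrix, $V$ contains a realization of $p$.

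First I would form the metric space $Z=\bigsqcup_{k\in\NN}V_k$, where each $V_k$ is a copy of $V$ and $d(x,y)=r$ whenever $x\in V_k$, $y\in V_l$ and $k\ne l$. A routine triangle-inequality check (using $\diam V_k\le r\le 2r$) shows that $Z$ is a metric space; it is separable, and complete because any Cauchy sequence is eventually contained in a single $V_k$. Since $\UU(Z)$ is a universal homogeneous Polish metric space it is isometric to $\UU$, so I may assume $\UU=\UU(Z)$; then $Z\subseteq\UU$ and every isometry of $Z$ extends uniquely to an element of $\Iso(\UU)$. Put $X_k=V_k$ and pick $\bar a_k\in X_k$ realizing $p$. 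Each $X_k$ is a complete ultrahomogeneous metric space, hence a homogeneous substructure of $\UU$ with $\bar a_k\in p(X_k)$: this gives Condition (1).

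For Condition (2), given $\phi_l\in\Aut(X_l)$ for $l\in\NN$, the map $\bigcup_l\phi_l\colon Z\to Z$ is an isometry: it is isometric on each $V_l$, and, since it maps each $V_l$ onto itself, it preserves the constant cross-distances $r$; hence it extends to $\phi\in\Iso(\UU)$ extending every $\phi_l$. For Condition (3), fix isometries $h_k\colon V\to X_k$ and, for $\sigma\in\SI$, define $\hat\sigma\colon Z\to Z$ by $\hat\sigma\upharpoonright X_k=h_{\sigma(k)}\circ h_k^{-1}\colon X_k\to X_{\sigma(k)}$. Then $\hat\sigma$ is a bijective isometry of $Z$ (isometric on each copy, and sending $X_k,X_l$ with $k\ne l$ onto $X_{\sigma(k)},X_{\sigma(l)}$ with $\sigma(k)\ne\sigma(l)$, again at distance $r$), so it extends to $e(\sigma)\in\Iso(\UU)$. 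The identity $h_{\sigma\tau(k)}\circ h_k^{-1}=(h_{\sigma\tau(k)}\circ h_{\tau(k)}^{-1})\circ(h_{\tau(k)}\circ h_k^{-1})$ together with uniqueness of extensions shows that $e$ is a homomorphism; it is injective, since $\sigma(k)\ne\tau(k)$ forces $e(\sigma)[X_k]=X_{\sigma(k)}\ne X_{\tau(k)}=e(\tau)[X_k]$; and $e(\sigma)[X_k]=X_l$ holds iff $X_{\sigma(k)}=X_l$ iff $\sigma(k)=l$, which is exactly the required equivalence. Thus every tuple $\bar a$ in $\UU$ is homogeneously isolated, and Theorem \ref{th:Triv} then yields that every homomorphism $\Phi\colon\Iso(\UU)\to K$ into a separable group $K$ with a left-invariant complete metric is trivial.

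The main obstacle is the choice of the pieces $X_k$: since $\UU$ itself is unbounded, copies of $\UU$ cannot be glued at a uniform finite distance, so the key step is to replace $\UU$ by a bounded Urysohn space $V$ whose diameter is still large enough to contain a realization of $p$ (and which, unlike an arbitrary finite metric space, is ultrahomogeneous). Once this substitution is made, everything else is bookkeeping --- the triangle-inequality verification for $Z$, and the observation that the ``diagonal'' maps $\bigcup_l\phi_l$ and the ``permutation'' maps $\hat\sigma$ of a disjoint union with constant cross-distances are automatically isometries.
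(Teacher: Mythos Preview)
Your proof is correct and follows essentially the same approach as the paper: build copies $X_k$ of a bounded Urysohn space, place them at a constant mutual distance inside $\UU$ via the Kat\v{e}tov construction $\UU(Z)$, and read off conditions (1)--(3) from the fact that piecewise isometries and block permutations of $Z$ are isometries of $Z$. The only cosmetic differences are that the paper realizes the bounded Urysohn space concretely as a sphere $W=\{y:d(x,y)=r\}$ in $\UU$ and uses cross-distance $2r$ rather than $r$; your version is slightly more explicit in verifying the three conditions, but the underlying idea is identical.
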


\begin{proof}
Let $\bar{a}$ be a tuple in $\UU$. Fix $x \in \UU$, and define
\[ r=\max \{ d(a^i,a^j): i,j \leq n \}, \, W=\{y \in \UU: d(x,y)=r \}. \]

Let $X_k$, $k \in \NN$, be pairwise disjoint copies of $W$, and define a metric $\rho$ on $X=\bigcup_k X_k$ by putting
\[ \rho(x,y)=2r \]
for $x \in X_k$, $y \in X_l$ with $k \neq l$. We can assume that $\UU=\UU(X)$. Clearly, for all $\phi_k \in \Aut(X_k)$, we have that $\bigcup_k \phi_k \in \Aut(X)$, so there exists a unique $\phi \in \Aut(\UU)$ that extends each $\phi_k $. The remaining conditions for homogeneous isolatedeness are obviously satisfied by $X_k$, $k \in \NN$.
\end{proof}

\section{The measure algebra}

Let $(X,\mathcal{B}, \mu)$ be the standard probability measure space. Let $\mbox{MALG}$ be the measure algebra of sets in $\mathcal{B}$ modulo null sets (even though, formally, elements of $\MA$ are classes of sets, we will refer to them as sets). Then $\MA$ can be endowed with a metric defined by
\[ \rho(A,B)=\mu(A\triangle B), \]
and we treat it as a metric structure together with this metric, the operation of symmetric difference $\Delta$, and the empty set $\emptyset$ as a constant. 

It is well known that  $\Aut(\MA)$ is isomorphic to the group $\Aut(X,\mu)$ of all measure preserving transformations of $X$ with the weak topology. 

In the measure algebra, tuples that are finite partitions of $X$ into positive measure sets are relevant. In \cite{Sa}, the following lemma has been proved (Lemma 9.5):

\begin{lemma} \label{le:Sa:MA}
Let $\bar{a} = (A^1, \ldots,A^n)$ be a partition of $X$ into positive measure sets, and let $p = \qftp{\bar{a}}$. Suppose $C^1,\ldots,C^n$ are such that $C^i \subseteq  A^i$ for each
$i \leq n$, and $\mu(C^1) = \ldots = \mu(C^n) > 0$. Let
\[ M = \{(B^1, \ldots ,B^n) \in p(\MA) : \forall \,  i \neq j \leq  n \  B^i \cap A^j \subseteq C^j \ \wedge  A^i \setminus B^i  \subseteq C^i \}. \]
Then $M$ is relatively $2\mu(C^1)$-saturated over $\bar{a}$.
\end{lemma}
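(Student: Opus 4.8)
The plan is to reduce the statement to an elementary mass‑rearrangement inside the atomless algebra $\MA$. The key preliminary observation is the shape of quantifier‑free types over $\bar a$: since $\MA$ carries only the operation $\triangle$ and the constant $\emptyset$, every term in variables $x^1,\dots,x^n$ and parameters $a^1,\dots,a^n$ reduces to a symmetric difference $\triangle_{v\in S}\,v$ over a (possibly empty) subset $S$ of $\{x^1,\dots,x^n,a^1,\dots,a^n\}$, and in a metric structure a quantifier‑free type is pinned down by the numbers $d(\tau_1,\tau_2)=\mu(\tau_1\triangle\tau_2)$. If $\bar b=(B^1,\dots,B^n)\in p(\MA)$, then $\bar b$ and $\bar a$ generate an algebra with atoms $B^i\cap A^j$, an atom lies in $\triangle_{v\in S}\,v$ exactly when it occurs in an odd number of the chosen sets, and hence $\mu(\triangle_{v\in S}\,v)$ is, for every $S$, one and the same function of the matrix $t_{ij}:=\mu(B^i\cap A^j)$ (one also uses $\mu(C)=\rho(C,\emptyset)$ and $\mu(C\cap D)=\tfrac12(\rho(C,\emptyset)+\rho(D,\emptyset)-\rho(C,D))$ to see that the language recovers all these measures). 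So $\qftp{\bar b/\bar a}$ depends only on $t$; note that $t$ automatically has column sums $\mu(A^j)$, and that $\bar b\in p(\MA)$ forces in addition row sums $\mu(A^i)$. Consequently a tuple realizes a $2\mu(C^1)$‑quantifier‑free type over $\bar a$ precisely when it lies in $p(\MA)$ and satisfies $\rho(A^i,B^i)<2\mu(C^1)$ for every $i$, and it suffices to show: for every such $\bar b^\ast$, with matrix $t^\ast$, there is $\bar b\in M$ with the same matrix $t^\ast$.

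Next I would unwind both conditions in terms of the matrix. For $\bar b^\ast\in p(\MA)$ one has $\rho(A^i,B^{\ast i})=2\mu(A^i)-2t^\ast_{ii}$, so $\rho(A^i,B^{\ast i})<2\mu(C^1)=2\mu(C^i)$ is equivalent to $\mu(A^i)-t^\ast_{ii}<\mu(C^i)$ and, since column $j$ of $t^\ast$ sums to $\mu(A^j)$, to
\[ \sum_{i\ne j}t^\ast_{ij}\;<\;\mu(C^j)\qquad\text{for every }j\le n. \]
On the other hand, the two clauses defining $M$ say the same thing: $A^i\setminus B^i\subseteq C^i$ is just $\bigcup_{j\ne i}(B^j\cap A^i)\subseteq C^i$, i.e.\ the first clause with the dummy indices renamed; so $\bar b\in M$ means $\bar b\in p(\MA)$ and, for each $j$, the pieces $B^i\cap A^j$ with $i\ne j$ all lie inside $C^j$.

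The heart of the argument is then the construction. Given $\bar b^\ast$ as above, I would work column by column. Fix $j\le n$; since $\MA$ is atomless and $\mu(C^j)>\sum_{i\ne j}t^\ast_{ij}$ by the displayed inequality, split $C^j$ into measurable pieces $P^i_j$ ($i\ne j$) with $\mu(P^i_j)=t^\ast_{ij}$ together with a remainder of measure $\mu(C^j)-\sum_{i\ne j}t^\ast_{ij}\ge 0$. Put $B^i\cap A^j:=P^i_j$ for $i\ne j$, let $B^j\cap A^j$ be the rest of $A^j$ (the remainder of $C^j$ together with $A^j\setminus C^j$), which has measure $\mu(A^j)-\sum_{i\ne j}t^\ast_{ij}=t^\ast_{jj}$, and set $B^i:=\bigcup_j(B^i\cap A^j)$. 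It is then routine to check that the sets $B^i\cap A^j$ ($i,j\le n$) partition $X$, hence so do the $B^i$; that $\mu(B^i)=\sum_j t^\ast_{ij}=\mu(B^{\ast i})=\mu(A^i)$, so $\bar b:=(B^1,\dots,B^n)\in p(\MA)$; that $\mu(B^i\cap A^j)=t^\ast_{ij}$, so $\qftp{\bar b/\bar a}=\qftp{\bar b^\ast/\bar a}$ by the first paragraph; and that $B^i\cap A^j=P^i_j\subseteq C^j$ for $i\ne j$, so $\bar b\in M$. As $\bar b^\ast$ (equivalently, its $2\mu(C^1)$‑quantifier‑free type over $\bar a$) was arbitrary, this shows $M$ is relatively $2\mu(C^1)$‑saturated over $\bar a$.

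The one genuinely load‑bearing point is the implication $\rho(A^i,B^{\ast i})<2\mu(C^1)\Rightarrow\sum_{i\ne j}t^\ast_{ij}<\mu(C^j)$: it is precisely this that lets all the off‑diagonal mass of column $j$ be squeezed inside $C^j$, and it is where the constant $2\mu(C^1)$ is consumed. Everything else is bookkeeping plus the atomlessness (divisibility) of $\MA$, so I do not anticipate a real obstacle; the point to be careful about is verifying that the reconstructed $\bar b$ is an honest partition of $X$ with $\mu(B^i)=\mu(A^i)$ — i.e.\ really lies in $p(\MA)$ — and not merely a tuple reproducing the prescribed intersection measures.
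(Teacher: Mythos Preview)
The paper does not supply its own proof of this lemma; it is quoted from \cite{Sa} (Lemma~9.5 there) and stated without argument. Your proof is correct and is the natural one: identify $\qftp{\bar b/\bar a}$ with the intersection matrix $t_{ij}=\mu(B^i\cap A^j)$, translate the distance bound $\rho(A^i,B^{\ast i})<2\mu(C^1)$ via the column sums into $\sum_{i\ne j}t^\ast_{ij}<\mu(C^j)$, and then use atomlessness of $\MA$ to park all the off-diagonal mass of column $j$ inside $C^j$. Your observation that the two clauses in the definition of $M$ are equivalent (given $\bar b\in p(\MA)$) is also correct and tidies the bookkeeping. This is almost certainly Sabok's argument as well; there is really only one way to do it.

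One minor remark: the sentence ``a tuple realizes a $2\mu(C^1)$-quantifier-free type over $\bar a$ precisely when it lies in $p(\MA)$ and \dots'' slightly overstates matters, since the paper's definition of an $\epsilon$-quantifier-free type does not literally force $\bar x$ to have type $p$. However, only types realized in $p(\MA)$ can possibly be realized in $M\subseteq p(\MA)$, and the sole use of relative saturation in the paper (Lemma~\ref{le:2in1}, applied to $g(\bar a)\in p(X)$) invokes only such types. So this is a wrinkle in how the paper phrases its definitions, not a gap in your argument.
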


\begin{lemma} \label{le:SI:MA}
Every finite partition of $X$ into positive measure sets is ample.
\end{lemma}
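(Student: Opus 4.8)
The plan is to deduce Lemma~\ref{le:SI:MA} from Sabok's Lemma~\ref{le:Sa:MA} by showing that, for a fixed partition $\bar a = (A^1,\dots,A^n)$ with $p = \qftp{\bar a}$, a comeager set of tuples $\bar b \in p(\MA)$ has the property that $G_{\bar b}[\bar a]$ is relatively $\delta$-saturated over $\bar a$ for a suitable $\delta > 0$ depending on $\bar b$. First I would describe the candidate comeager set: given $\bar b = (B^1,\dots,B^n) \in p(\MA)$, all the "overlaps" $B^i \cap A^j$ for $i \ne j$ and the "deficits" $A^i \setminus B^i$ are measure-zero only in trivial cases, but what we want is that $\bar b$ is in ``general position'' relative to $\bar a$ in a quantitative sense. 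The key observation connecting to Lemma~\ref{le:Sa:MA} is that if $w \in G$ moves $\bar a$ to $\bar b$ (such $w$ exists by homogeneity, since $\qftp{\bar b} = \qftp{\bar a} = p$), then $G_{\bar b} = w G_{\bar a} w^{-1}$, so $G_{\bar b}[\bar a] = w\bigl(G_{\bar a}[w^{-1}(\bar a)]\bigr)$; hence understanding $G_{\bar b}[\bar a]$ reduces to understanding how much of $p(\MA)$ is reachable from a tuple by the stabilizer of $\bar a$, which is exactly the content of Lemma~\ref{le:Sa:MA} applied with $\bar a$ in the role of the fixed partition and the pair $(\bar a, \bar b)$ determining the relevant ``$C^i$'' sets.

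Concretely, the second step is to choose, for a given $\bar b$, sets $C^1,\dots,C^n$ with $C^i \subseteq A^i$, $\mu(C^1) = \cdots = \mu(C^n) > 0$, such that the set $M$ of Lemma~\ref{le:Sa:MA} is contained in $G_{\bar b}[\bar a]$ up to the homogeneity conjugation above; more precisely, I would show that every tuple in the set $M$ associated with $(\bar a; C^1,\dots,C^n)$ lies in $G_{\bar b}[\bar a]$ provided $\bar b$ itself, when compared with $\bar a$, only disagrees ``inside'' the $C^i$'s, i.e.\ provided $B^i \cap A^j \subseteq C^j$ and $A^i \setminus B^i \subseteq C^i$ for $i \ne j$. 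The point is that for such $\bar b$, the partial isomorphism of finitely generated substructures sending $\bar a$ to $\bar b$ and fixing the complement of $\bigcup_i C^i$ extends to an automorphism; composing, any realization in $M$ of an $\epsilon$-type over $\bar a$ can be pulled into $G_{\bar b}[\bar a]$. So the target comeager set is $T = \{\bar b \in p(\MA): \exists\, \delta>0 \ \exists\, C^i \subseteq A^i \text{ with } \mu(C^i)\equiv \delta,\ B^i\cap A^j\subseteq C^j,\ A^i\setminus B^i\subseteq C^i \text{ for } i\ne j\}$, and one checks $G_{\bar b}[\bar a] \supseteq M$ so it is relatively $2\delta$-saturated.

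The third step is the topological one: verify that $T$ is comeager in $p(\MA)$. I expect $T$ to actually be open and dense, or at least dense $G_\delta$. Density is clear: starting from any $\bar b_0 \in p(\MA)$ and any $\varepsilon$-neighbourhood, one can perturb $\bar b_0$ slightly so that all the ``overlap'' and ``deficit'' sets $B^i \cap A^j$, $A^i \setminus B^i$ become honestly contained in a common-measure family $C^i \subseteq A^i$ with $\mu(C^i)$ small but positive; indeed one has freedom to enlarge each $C^i$ to equalize measures while keeping it inside $A^i$, which is possible since $\mu(A^i) > 0$. Openness (or the $G_\delta$ property) follows because the defining conditions are inclusions between elements of $\MA$ together with a finite system of measure equalities, and the former are open conditions in the metric $\rho$ while the latter, after the freedom to enlarge the $C^i$, can be absorbed; if strict openness fails I would instead exhibit $T$ as a countable intersection over rational $\delta$ of open dense sets. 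The main obstacle will be exactly this bookkeeping: making sure the $C^i$ can be chosen \emph{simultaneously} with equal measures, contained in the respective $A^i$, and large enough to swallow all the overlaps and deficits of a nearby $\bar b$, while small enough that the resulting saturation radius $2\delta$ is a genuine positive number and that the perturbation stays within the prescribed neighbourhood; once that combinatorics is set up, invoking Lemma~\ref{le:Sa:MA} and the homogeneity conjugation finishes the proof.
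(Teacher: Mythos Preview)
There is a genuine gap: the inclusion $M \subseteq G_{\bar{b}}[\bar{a}]$ does not follow from your hypothesis that $\bar{b}$ itself lies in $M$. The cleanest way to see this is that your set $T$ contains $\bar a$ (the containments $B^i\cap A^j\subseteq C^j$ and $A^i\setminus B^i\subseteq C^i$ are vacuous when $\bar b=\bar a$, for any equal-measure $C^i\subseteq A^i$), yet $G_{\bar a}[\bar a]=\{\bar a\}$ is certainly not relatively $\delta$-saturated for any $\delta>0$. The problem is not confined to this one point. Any $g\in G_{\bar b}$ must preserve the numbers $\mu(A^i\cap B^j)$, whereas membership in $M$ does not constrain $\mu(c^i\cap B^j)$; so as soon as $M$ contains a tuple $\bar c$ with $\mu(c^i\cap B^j)\neq\mu(A^i\cap B^j)$ for some $i,j$, the inclusion fails. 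For instance with $n=2$, $A^1=[0,\tfrac12)$, $A^2=[\tfrac12,1)$, $C^1=[0,\tfrac14)$, $C^2=[\tfrac12,\tfrac34)$ and $B^1=[\tfrac18,\tfrac58)$, one has $\bar b\in M$, but $\bar c$ with $c^1=[0,\tfrac18)\cup[\tfrac14,\tfrac12)\cup[\tfrac58,\tfrac34)$ is also in $M$ and satisfies $\mu(c^1\cap B^1)=\tfrac14\neq\tfrac38=\mu(A^1\cap B^1)$, hence $\bar c\notin G_{\bar b}[\bar a]$. The conjugation identity $G_{\bar b}[\bar a]=w\bigl(G_{\bar a}[w^{-1}(\bar a)]\bigr)$ you wrote down is correct but does not help: it merely transports the same measure-invariant obstruction to the $G_{\bar a}$-orbit of $w^{-1}(\bar a)$.

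The missing idea is to choose the $C^i$ so that $\bigcup_i C^i$ lies inside a \emph{single} block $B^{i_0}$; then every automorphism supported on $\bigcup_i C^i$ automatically preserves each $B^j$, and $M\subseteq G_{\bar b}[\bar a]$ is immediate. This is possible exactly when some $B^{i_0}$ meets every $A^i$ in positive measure, and the paper takes $T$ to be the (open, dense) set of such $\bar b$, putting $C^i=W\cap A^i$ for a suitable $W\subseteq B^{i_0}$ with $\mu(W\cap A^1)=\cdots=\mu(W\cap A^n)>0$. Note that this choice is incompatible with yours: you require $C^i\supseteq A^i\setminus B^i$, and one checks that $\bigcup_i C^i\subseteq B^{i_0}$ together with this forces $\bar b=\bar a$.
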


\begin{proof}
Fix a partition $\bar{a} = (A^1, \ldots,A^n)$ of $X$ into positive measure sets, and let $p = \qftp{\bar{a}}$. Let $T$ be the set of all the tuples $\bar{b} \in p(\MA)$ such that some element of $\bar{b}$ has a non-trivial intersection with every $A^i$.  We show that $T$ witnesses that $\bar{a}$ is ample.

Clearly, $T$ is open and dense, and so comeager. Fix $\bar{b}=(B^1, \ldots, B^n) \in T$, and $i_0 \leq n$ such that $B^{i_0}$ has a non-trivial intersection with every $A^i$. Clearly, we can find $W \subseteq B^{i_0}$ such that 
\[ 0<\mu(W \cap A^1)=\mu(W \cap A^i) \]
for every $i \leq n$. Put $G=\Aut(\MA)$. It is easy to see that $M \subseteq G_{\bar{b}}[\bar{a}]$, where $M$ is defined as in Lemma \ref{le:Sa:MA} for $\bar{a}$, and $C^i=W \cap A^i$, $i \leq n$. Therefore $G_{\bar{b}}[\bar{a}]$ is relatively $2\mu(W \cap A^1)$-saturated over $\bar{a}$.
\end{proof}

\begin{theorem}
The group $\Aut(X,\mu)$ has the automatic continuity property, the small index property, and uncountable cofinality.
\end{theorem}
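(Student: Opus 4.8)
The plan is to invoke Theorem \ref{th:Main} with $X = \MA$, so the work reduces to checking the three structural hypotheses of that theorem together with the ampleness of all relevant tuples. By our convention, the relevant tuples in $\MA$ are the finite partitions of $X$ into positive measure sets, and Lemma \ref{le:SI:MA} already establishes that every such tuple is ample. Thus it only remains to note that $\MA$ is a complete, homogeneous metric structure that has locally finite automorphisms and the extension property; these facts are established in \cite{Sa} (the relevant statements being, respectively, that $(\MA,\rho)$ is a complete metric space, that $\MA$ is ultrahomogeneous as a structure in the language of Boolean algebras with a measure, and the analogues of Lemmas~8.9 and the surrounding results there adapted to the measure algebra). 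Granting these, Theorem \ref{th:Main} immediately yields that $G = \Aut(\MA)$ has the small index property, the automatic continuity property, and uncountable cofinality for non-open subgroups.

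Next I would upgrade ``uncountable cofinality for non-open subgroups'' to plain ``uncountable cofinality'', exactly as in the proof of Theorem \ref{th:Ur}. For this it suffices to observe that $\Aut(\MA)$ is connected — equivalently, that $\Aut(X,\mu)$ with the weak topology is connected, which is a classical fact — so that it contains no proper open subgroup. Consequently any countable exhaustive chain of subgroups must already contain a member of finite index which, being open, forces that member to be the whole group; combined with uncountable cofinality for non-open subgroups this gives uncountable cofinality outright.

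Finally, to phrase the conclusion in terms of the more familiar group $\Aut(X,\mu)$, I would recall the isomorphism $\Aut(\MA) \cong \Aut(X,\mu)$ (as topological groups, with the weak topology on the latter side) quoted in the text; since all three properties in question are invariants of topological-group isomorphism, they transfer verbatim. I do not anticipate a genuine obstacle here: the entire content has been pushed into Lemma \ref{le:SI:MA} (which in turn rests on Sabok's Lemma \ref{le:Sa:MA}) and into the citations for the structural hypotheses. The only point requiring a modicum of care is making sure the ``relevant tuples'' bookkeeping is consistent, i.e. that for every tuple $\bar{a}$ in $\MA$ there is a finite positive-measure partition $\bar{b}$ with $G_{\bar{b}} \leq G_{\bar{a}}$ — but this is clear, since any finite tuple of elements of $\MA$ generates a finite subalgebra whose atoms form such a partition, and an automorphism fixing all the atoms fixes every element of the subalgebra.
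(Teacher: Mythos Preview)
Your proposal is correct and follows exactly the paper's own proof: invoke Theorem~\ref{th:Main} via Lemma~\ref{le:SI:MA} and the structural hypotheses on $\MA$ supplied by \cite{Sa}, then upgrade to full uncountable cofinality using connectedness of $\Aut(X,\mu)$, just as in Theorem~\ref{th:Ur}. One tiny wording fix in the upgrade step: what Theorem~\ref{th:Main} gives is that some member of a countable exhaustive chain must be \emph{open} (not ``of finite index''), and connectedness then forces that open subgroup to be all of $G$.
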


\begin{proof}
The measure algebra has locally finite automorphisms and the extension property (see Lemma 9.1 and Lemma 9.2 in \cite{Sa}). Therefore, by Lemma \ref{le:SI:MA}, and Theorem \ref{th:Main}, we get that $\Aut(X,\mu)$ has the automatic continuity property, the small index property, and uncountable cofinality for non-open subgroups. But $\Aut(X,\mu)$ is known to be connected, so it does not contain any non-trivial open subgroups. Thus, $\Aut(X,\mu)$ has uncountable cofinality as well.
\end{proof}

\section{The Hilbert space}
The orthogonal group $O(\ell_2)$  is the group of all linear transformations of the real Hilbert space that preserve the inner product. Here, the real Hilbert space is treated as the metric structure $H$ with $\ell_2$ as the universe of the first sort, and the second sort being the real line with the field structure (including the inverse function defined on non-zero elements by $x \mapsto  x^{-1}$, and mapping $0$ to $0$, as well as the function $x \mapsto -x$) and constants for the rationals. We also add to the language the constant $0$ for the zero vector, the addition $+$, the inner product function $\langle \cdot , \cdot \rangle:  \ell_2 \times \ell_2 \rightarrow \RR$, and the multiplication by scalars function  $\cdot : \RR \times \ell_2 \rightarrow \ell_2$ (i.e. $(a, v) \mapsto→ a \cdot v$). Clearly, $\Aut(H)$  is isomorphic to $O(\ell_2)$.

The complex Hilbert space is defined as a metric structure analogously to the real Hilbert space. Then $\Aut(H)$ is isomorphic to the unitary group $U(\ell_2)$.

In the Hillbert space, real or complex, orthonormal tuples are relevant. In the next lemma, we consider only the real Hilbert space but the proof is exactly the same for the complex Hilbert space.

\begin{lemma} \label{le:SI:H}
Every orthonormal tuple in $H$ is ample.
\end{lemma}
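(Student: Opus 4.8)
The plan is to mimic the structure of the proof of Lemma \ref{le:SI:U}: exhibit a comeager set $T \subseteq p(H)$ of "witnessing" tuples $\bar{b}$ such that for each $\bar{b} \in T$ the orbit $G_{\bar{b}}[\bar{a}]$ is relatively $\delta$-saturated over $\bar{a}$ for a suitable $\delta > 0$. Fix an orthonormal $n$-tuple $\bar{a} = (a^1, \ldots, a^n)$ with $p = \qftp{\bar{a}}$. The natural choice is to take $T$ to be the set of orthonormal $n$-tuples $\bar{b} = (b^1, \ldots, b^n)$ such that $\spa(\bar{b})$ is "in general position" with respect to $\spa(\bar{a})$ — concretely, $\spa(\bar{a}) \cap \spa(\bar{b}) = \{0\}$ and, more strongly, the orthogonal projection of $\spa(\bar{a})$ onto $\spa(\bar{b})^\perp$ is injective, i.e. no nonzero vector of $\spa(\bar{a})$ is too close to $\spa(\bar{b})$. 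Such configurations form a dense $G_\delta$ (hence comeager) subset of $p(H)$, because $p(H)$ is the space of orthonormal $n$-tuples and the "bad" set where the two spans meet nontrivially, or almost so, is a countable union of nowhere dense closed sets.

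Next I would fix $\bar{b} \in T$ and choose $\delta > 0$ small enough (depending on the angle between $\spa(\bar{a})$ and $\spa(\bar{b})$) that whatever small perturbation of $\bar{a}$ we must realize still stays in general position with $\bar{b}$. Given an $\epsilon$-quantifier-free type $q(\bar{x}/\bar{a})$ with $\epsilon \le \delta$ realized in $H$ by some tuple $\bar{c}$ with $d(a^i, c^i) < \delta$: since $q$ extends $\qftp{\bar{a}}$, the tuple $\bar{c}$ is again orthonormal, and it lies in the ($2n$-dimensional, generically) subspace spanned by $\bar{a}$ and $\bar{b}$ only up to the type data — but in fact $\bar{c}$ may live anywhere in $H$. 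The key point is that because $H$ is infinite-dimensional and homogeneous, I can first move $\bar{c}$ by an element of $G_{\bar{a}}$... no — rather, I want an element of $G_{\bar{b}}$ carrying $\bar{a}$ to $\bar{c}$. The strategy: there is an isometry of $H$ fixing $\spa(\bar{b})$ pointwise and sending $\bar{a}$ to $\bar{c}'$ for any orthonormal tuple $\bar{c}'$ with the same inner products with $\bar{b}$ as $\bar{a}$ has; the condition $q$ "over $\bar a$" constrains $d(a^i, c^i)$, i.e. $\langle a^i, c^i\rangle$, but not the relation of $\bar c$ to $\bar b$. Hence I realize $q$ inside $p(H)$ by a tuple $\bar{c}''$ chosen with the prescribed inner products among themselves and with $\bar a$, AND with the same inner products with $\bar b$ that $\bar a$ has — this is possible by a finite-dimensional linear-algebra / Gram-matrix positivity computation, using that $\delta$ is small and the Gram matrix deforms continuously. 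Then homogeneity of $H$ supplies $\phi \in G_{\bar b}$ with $\phi(\bar a) = \bar c''$, witnessing that $q$ is realized in $G_{\bar b}[\bar a]$.

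The main obstacle is the Gram-matrix / positive-semidefiniteness bookkeeping in the previous step: I must check that the combined system of inner-product constraints — the ones imposed by $q$ on $\bar c$ and among $\bar c$, together with the demand that $\bar c$ have exactly the inner products with $\bar b$ that $\bar a$ does — is jointly realizable by vectors in $\ell_2$, i.e. that the corresponding $(2n)\times(2n)$ Gram matrix is positive semidefinite, when $\bar b$ is in general position relative to $\bar a$ and the perturbation $\delta$ is small. This is exactly where the genericity of $\bar b$ (the uniform lower bound on the angle between the spans) and the smallness of $\delta$ are used: the desired Gram matrix is a small perturbation of a block matrix that is positive semidefinite with controlled kernel, so positivity is preserved. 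Once this is in hand, the homogeneity of $H$ and the reduction to $\epsilon$-quantifier-free types finish the argument, and since the complex case only changes $\RR$ to $\mathbb{C}$ in the inner product without affecting any of this reasoning, the same proof applies.
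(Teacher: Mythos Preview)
Your choice of the comeager set $T$ --- orthonormal tuples $\bar b$ with $\dim\spa(\bar a,\bar b)=2n$ --- matches the paper's exactly, and the reduction to ``find $\bar c''$ realising the given type over $\bar a$ with the same inner products against $\bar b$ that $\bar a$ has'' is equivalent to asking for an element of $G_{\bar b}[\bar a]$. But the route you take for the key step is different from the paper's, and the obstacle you flag is not merely bookkeeping: your perturbation heuristic is actually false.

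The paper does not argue via a Gram--matrix perturbation. For each $\bar b\in T$ it builds auxiliary orthonormal systems $C=\bigcup_i C_i\subset(\spa\bar b)^{\perp}$ (with $|C_i|=n$, each $c\in C_i$ having nonzero inner product with $a^i$ and zero with every $a^j$, $j\ne i$) and $D=\bigcup_i D_i\subset(\spa(\bar a,\bar b,C))^{\perp}$, sets $\delta=(\min_{i,j}\lVert\pi_{c_{i,j}}(a^i)\rVert)^2$, and then realises the required inner products by composing plane rotations in the $n^2$ mutually orthogonal planes $\spa(c_{i,j},d_{i,j})$. These rotations lie in $G_{\bar b}$ and move the coordinates of each $a^i$ independently, so the argument is entirely constructive and never appeals to a ``PSD under small perturbation'' principle.

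Your heuristic fails because the unperturbed $3n\times 3n$ Gram matrix (at $\bar c''=\bar a$) is only positive \emph{semi}definite, with an $n$-dimensional kernel, and such matrices need not stay PSD under small perturbations. Concretely: for $n\ge 2$ take $Q\in O(n)$ close to $I$ and let $c^j=\sum_i Q_{ij}a^i$, a small rotation of $\bar a$ inside $\spa(\bar a)$. This gives a $\delta$-qf type over $\bar a$ for arbitrarily small $\delta$, yet because $Q^TQ=I$ every realisation of this type is forced to equal $\bar c$ itself. Your constraint $\langle b^k,c''^j\rangle=\langle b^k,a^j\rangle$ then reads $A^TQ=A^T$ (with $A_{ij}=\langle a^i,b^j\rangle$), i.e.\ $A^T(Q-I)=0$, which fails for generic $\bar b\in T$. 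Equivalently, writing $E=Q-I$, the Schur complement you must show $\succeq 0$ is exactly $-(E+E^T)-E^T(I-AA^T)^{-1}E$; for orthogonal $Q$ one has $-(E+E^T)=E^TE$, so this reduces to $-E^T(I-AA^T)^{-1}AA^T E$, which is $\preceq 0$ and strictly negative once $A\ne 0$ and $E\ne 0$. Thus no choice of $\delta>0$ depending only on $\bar b$ makes your Gram-matrix step go through; the difficulty is not quantitative but structural, and the paper avoids it by working with the explicit rotations described above rather than by a positivity argument.
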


\begin{proof}

Fix an orthonormal tuple $\bar{a}=(a^1, \ldots, a^n)$ in $H$, and let $p = \qftp{\bar{a}}$. 
Let $T$ be the set of all the tuples $\bar{b} \in p(H)$ such that $\spa(\bar{a},\bar{b})$ has dimension $2n$. We show that $T$ witnesses that $\bar{a}$ is ample.

Clearly, $T$ is open and dense, and so comeager. Fix $\bar{b} \in T$. It is easy to construct an orthonormal set
\[ C=C_1 \dot\cup \ldots \dot\cup C_n \]
in $(\spa(\bar{b}))^\perp$ such that each $C_i$ has size $n$, the projections $\pi_{c}(a^i)$ of $a^i$ on $c$ are trivial for every $i,j \leq n$ with $i \neq j$, $c \in C_j$, and the projection $\pi_c(a^i)$ is non-trivial for every $c \in C_i$. The crucial observation here is that, for each $i$, if $A$ is a finitely dimensional subspace of $H$ such that $a^i \not \in A$, then $A^\perp \not \subseteq (\spa(a^i))^\perp$, and there exists $a \in A^\perp \setminus (\spa(a^i))^\perp$ such that $a^i \not \in \spa(A,a)$.

Also, we fix an orthonormal set
\[ D=D_1 \dot\cup \ldots \dot\cup D_n \]
in $(\spa(\bar{a}, \bar{b},C))^\perp$ such that each $D_i$ has size $n$.


%
%


Now, using $C$ and $D$, we will find $\delta>0$ such that $G_{\bar{b}}[\bar{a}]$ is $\delta$-relatively saturated over $\bar{a}$. Let $C_i=\{c_{i,j}\}_{j \leq n}$, $D_i=\{d_{i,j}\}_{j \leq n}$, $i \leq n$, and let 

\[ \delta=(\min \{ \left\| \pi_{c_{i,j}}(a^i) \right\|: i,j \leq n \})^2. \]

Observe that each $a^i$ can be independently rotated  in each of the planes $\spa(c_{i,j}, d_{i,j})$, $j \leq n$, using elements of $G_{\bar{b}}$, to modify the values of coordinates $c_{i,j}$ between $0$ and $\delta$, while keeping the resulting tuple orthonormal.  As $\pi_{d_{i,j}}(a^i)$ is trivial for all $i,j \leq n$, and $\pi_{c_{i',j}}(a^i)$ is trivial if $i \neq i'$, this means that for all $0 \leq \delta_{i,j} < \delta$, $i,j \leq n$, there exists $(c^1, \ldots, c^n) \in G_{\bar{b}}[\bar{a}]$ such that
%
%
\[ \langle a^i,c^j \rangle=1-\delta_{i,j}. \]
for $i,j \leq n$. Thus, $G_{\bar{b}}[\bar{a}]$ is $\delta$-relatively saturated over $\bar{a}$.
\end{proof}


\begin{theorem}
The groups $O(\ell_2)$ and $U(\ell_2)$ have the automatic continuity property, the small index property, and uncountable cofinality.
\end{theorem}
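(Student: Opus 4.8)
The plan is to mirror exactly the proofs already given for $\Iso(\UU)$ and $\Aut(X,\mu)$. First I would record that the Hilbert space $H$, viewed as the metric structure fixed at the beginning of this section, is complete and homogeneous, and that it has locally finite automorphisms and the extension property. Completeness and homogeneity are classical: homogeneity in the metric-structure sense amounts to the fact that any linear isometry between finite-dimensional (equivalently, finitely generated) subspaces extends to a global orthogonal, resp.\ unitary, transformation. Local finiteness of automorphisms and the extension property are verified in \cite{Sa} in the section treating the Hilbert space. Granting this, $H$ satisfies all the standing hypotheses of Theorem \ref{th:Main}.

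Next, orthonormal tuples are relevant in $H$, and by Lemma \ref{le:SI:H} every orthonormal tuple is ample; hence every relevant tuple in $H$ is ample. Theorem \ref{th:Main} then yields at once that $\Aut(H)$ has the small index property, the automatic continuity property, and uncountable cofinality for non-open subgroups. Since $\Aut(H)$ is isomorphic, as a topological group, to $O(\ell_2)$ in the real case and to $U(\ell_2)$ in the complex case — the topology of pointwise convergence on the first sort $\ell_2$ being precisely the strong operator topology — these three properties transfer verbatim to $O(\ell_2)$ and $U(\ell_2)$. The complex case is handled identically to the real case, as already noted in the remark preceding Lemma \ref{le:SI:H}.

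Finally, to upgrade \emph{uncountable cofinality for non-open subgroups} to plain uncountable cofinality, I would invoke connectedness: both $O(\ell_2)$ and $U(\ell_2)$, in the strong operator topology, are connected, hence contain no proper non-trivial open subgroup; therefore any exhaustive chain of proper subgroups consists of non-open subgroups, and the non-open version of uncountable cofinality already rules such chains out. This is the same closing step used in Theorem \ref{th:Ur} and in the measure-algebra theorem. I do not anticipate a genuine obstacle: the only content beyond citation is checking that $H$ as a metric structure meets the hypotheses of Theorem \ref{th:Main}, and the substantive work — producing the required relatively $\delta$-saturated sets of stabilizer orbits — has already been carried out in Lemma \ref{le:SI:H}.
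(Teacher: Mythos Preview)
Your proposal is correct and follows essentially the same route as the paper: verify that $H$ has locally finite automorphisms and the extension property (by citing \cite{Sa}), combine this with Lemma \ref{le:SI:H} to invoke Theorem \ref{th:Main}, and then use connectedness of $O(\ell_2)$ and $U(\ell_2)$ to pass from uncountable cofinality for non-open subgroups to plain uncountable cofinality. The paper's own proof is just a terser version of what you wrote.
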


\begin{proof}
The Hilbert space has locally finite automorphisms and the extension property  (see Lemma 10.12 and Corollary 10.4 in \cite{Sa}). Therefore, by Lemma \ref{le:SI:H}, and Theorem \ref{th:Main}, we get that $O(\ell_2)$ and $U(\ell_2)$ have the automatic continuity property, the small index property, and uncountable cofinality for non-open subgroups. But $O(\ell_2)$ and $U(\ell_2)$ are known to be connected, so they do not contain any non-trivial open subgroups. Thus, $O(\ell_2)$ and $U(\ell_2)$ have uncountable cofinality as well.
\end{proof}

It has been proved in \cite{Ts}, and later in \cite{Sa} (see Theorem 7.3 and Lemma 10.14) that every homomorphism $\Phi: O(\ell_2) \rightarrow K$ or $\Phi: U(\ell_2) \rightarrow K$ into a separable topological group $K$ with a  left-invariant, complete metric is trivial. We provide an alternative, very short proof of this result.

\begin{theorem}
Every tuple in the Hilbert space is homogeneously isolated. In particular, every homomorphism $\Phi: O(\ell_2) \rightarrow K$ or $\Phi: U(\ell_2) \rightarrow K$ into a separable topological group $K$ with a left-invariant, complete metric is trivial.
\end{theorem}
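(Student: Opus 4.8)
The plan is to verify that \emph{every} tuple in the Hilbert space $H$ is homogeneously isolated in the sense of Definition \ref{de:IHS}; once that is done, the triviality statement is immediate from Theorem \ref{th:Triv}, since $H$ is complete and homogeneous and, by \cite{Sa} (Lemma 10.12 and Corollary 10.4), has locally finite automorphisms and the extension property, and since $\Aut(H)$ is isomorphic to $O(\ell_2)$ in the real case and to $U(\ell_2)$ in the complex case. So the whole task is to construct, for an arbitrary tuple, the witnessing data $X_k$, $\bar a_k$ and the embedding of $\SI$; I will only sketch the routine linear-algebra verifications.

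Fix a tuple $\bar a=(a^1,\dots,a^n)$ in $H$ with $p=\qftp{\bar a}$. Let $V=\spa(\bar a)$ if $\bar a$ is not the zero tuple, and let $V$ be an arbitrary one-dimensional subspace otherwise; either way $V$ is a finite-dimensional (hence closed) subspace of $\ell_2$ which, with the induced structure, is a homogeneous metric substructure of $H$ — a finite-dimensional Hilbert space is ultrahomogeneous by Gram--Schmidt — and $V$ contains a realization of $p$. Next I would decompose $\ell_2$ as an orthogonal Hilbert-space direct sum $\ell_2=\widehat{\bigoplus}_{k\in\NN} X_k$ of countably many nonzero closed subspaces, each equipped with a fixed linear isometry $u_k\colon V\to X_k$; this is possible because splitting an orthonormal basis of $\ell_2$ into countably many blocks of size $\dim V$ produces exactly such a decomposition. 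Finally put $\bar a_k=(u_k(a^1),\dots,u_k(a^n))$, a tuple in $X_k$ with $\qftp{\bar a_k}=p$. I would then claim that the $X_k$ and $\bar a_k$ witness that $\bar a$ is homogeneously isolated.

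Condition (1) of Definition \ref{de:IHS} holds since each $X_k\cong V$ is a homogeneous substructure. For condition (2), given $\phi_l\in\Aut(X_l)=O(X_l)$ for every $l$, the map $\phi(\sum_l x_l)=\sum_l\phi_l(x_l)$ (for $x_l\in X_l$ with $\sum_l\|x_l\|^2<\infty$) is well defined because each $\phi_l$ is isometric, is a surjective linear isometry of $\ell_2$, hence lies in $\Aut(H)$, and visibly restricts to $\phi_l$ on $X_l$. For condition (3), given $\sigma\in\SI$ define $e(\sigma)\in O(\ell_2)$ by letting $e(\sigma)\upharpoonright X_k$ be the isometry $u_{\sigma(k)}\circ u_k^{-1}\colon X_k\to X_{\sigma(k)}$ and extending by orthogonal direct sum as in (2). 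Then $e(\sigma\tau)=e(\sigma)e(\tau)$ (check on each $X_k$ using $u_{\sigma\tau(k)}u_k^{-1}=(u_{\sigma(\tau(k))}u_{\tau(k)}^{-1})(u_{\tau(k)}u_k^{-1})$), $e$ is injective (if $e(\sigma)=\Id$ then $X_{\sigma(k)}=X_k$ for all $k$, and since distinct $X_k$ are distinct nonzero mutually orthogonal subspaces this forces $\sigma=\Id$), and $e(\sigma)[X_k]=X_{\sigma(k)}$, so indeed $e(\sigma)[X_k]=X_l$ iff $\sigma(k)=l$. The complex case is word for word the same, with complex subspaces and unitary maps throughout.

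I do not expect a genuine obstacle here; as with the Urysohn computation in the previous section, the conditions are essentially ``obviously satisfied'' once the decomposition is in place. The only points needing a little care are the degenerate case $\bar a=\bar 0$, handled above by enlarging $V$ to a line so that the pieces $X_k$ are genuinely pairwise distinct, and the bookkeeping in (3) ensuring that $e$ is an injective group homomorphism into $G$ rather than merely a permutation action on the set $\{X_k\}$; everything else is standard Hilbert-space linear algebra.
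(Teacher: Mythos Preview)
Your argument is correct and follows essentially the same route as the paper: decompose $\ell_2$ into countably many pairwise orthogonal closed subspaces $X_k$, each containing a realization of $p$, and let $\SI$ act by permuting the summands. The only cosmetic difference is that the paper takes each $X_k$ to be an infinite-dimensional copy of $\ell_2$ (so a single family $\{X_k\}$ serves for every tuple at once), whereas you take the $X_k$ finite-dimensional, isometric to $\spa(\bar a)$, and hence dependent on $\bar a$; this changes nothing, since Definition~\ref{de:IHS} is verified tuple by tuple anyway.
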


\begin{proof}
Let $X_k \subseteq H$, $k \in \NN$, be pairwise orthogonal copies of the Hilbert space. It is straightforward to check that $X_k$, $k \in \NN$, witness that every tuple in $H$ is homogeneously isolated.
\end{proof}

\end{document}